\documentclass[11pt,epsf]{article}
\usepackage{amsthm}
\usepackage{amsfonts}
\usepackage{graphicx}
\usepackage{amssymb}
\headsep 0cm
\headheight 0cm
\topmargin 0cm
\evensidemargin 0cm
\oddsidemargin 0cm
\textheight 23cm
\textwidth 16cm
\unitlength 1mm
\title{On the double zeros of a partial theta function}
\author{Vladimir Petrov Kostov\\ 
Universit\'e de Nice, 
Laboratoire de Math\'ematiques, Parc Valrose,\\ 06108 Nice Cedex 2, France,  
e-mail: vladimir.kostov@unice.fr} 
\date{}
\bibliographystyle{plain} 
\newtheorem{tm}{Theorem}

\newtheorem{rem}[tm]{Remark}

\newtheorem{lm}[tm]{Lemma}

\newtheorem{prop}[tm]{Proposition}
\newtheorem{nota}[tm]{Notation}
\begin{document} 
\maketitle 

\begin{abstract}
The series $\theta (q,x):=\sum _{j=0}^{\infty}q^{j(j+1)/2}x^j$ 
converges for $q\in [0,1)$, $x\in \mathbb{R}$, 
and defines a {\em partial theta function}. For any fixed 
$q\in (0,1)$ it has infinitely many negative zeros. 
For $q$ taking one of the {\em spectral} values $\tilde{q}_1$, 
$\tilde{q}_2$, $\ldots$   
(where $0.3092493386\ldots =\tilde{q}_1<\tilde{q}_2<\cdots <1$, 
$\lim _{j\rightarrow \infty}\tilde{q}_j=1$) the function $\theta (q,.)$ 
has a double zero $y_j$ which is the rightmost of its real zeros 
(the rest of them being simple). 
For $q\neq \tilde{q}_j$ the partial theta function has no multiple real zeros.  
We prove that  
$\tilde{q}_j=1-\pi /2j+(\log j)/8j^2+O(1/j^2)$  
and $y_j=-e^{\pi}e^{-(\log j)/4j+O(1/j)}$.\\   

{\bf AMS classification:} 26A06\\

{\bf Key words:} partial theta function; spectrum; asymptotics
\end{abstract}

\section{Introduction}

Consider the bivariate series $\theta (q,x):=\sum _{j=0}^{\infty}q^{j(j+1)/2}x^j$. 
For each fixed $q$ of the open unit disk it defines an entire function in $x$ 
called a {\em partial theta function}. This terminology is 
explained by the fact that the Jacobi theta function is the sum of the series 
$\sum _{j=-\infty}^{\infty}q^{j^2}x^j$ and one has 
$\theta (q^2,x/q)=\sum _{j=0}^{\infty}q^{j^2}x^j$. There are different domains 
in which the function $\theta$ finds applications: 
asymptotic analysis (\cite{BeKi}), statistical physics 
and combinatorics (\cite{So}), Ramanujan type $q$-series 
(\cite{Wa}) and the theory 
of (mock) modular forms (\cite{BrFoRh}). See also \cite{AnBe} 
for more information about this function. 

The function $\theta$ satisfies the following functional equation:

\begin{equation}\label{FE}
\theta (q,x)=1+qx\theta (q,qx)
\end{equation}

In what follows we consider $q$ as a parameter and $x$ as a variable. 
We treat only the case $q\in (0,1)$, $x\in \mathbb{R}$. For each fixed $q$ 
the function $\theta (q,.)$ has infinitely many negative zeros. (It has no 
positive zeros because its Taylor coefficients are all positive.) 
There exists a 
sequence of values $\tilde{q}_j$ of $q$ (called {\em spectral values}) such 
that $0.3092493386\ldots=\tilde{q}_1<\tilde{q}_2<\cdots$ (where  
$\tilde{q}_j\rightarrow 1^-$ as $j\rightarrow \infty$) for which 
and only for which the function $\theta (q,.)$ has a multiple real zero $y_j$ 
(see \cite{KoSh} and \cite{Ko2}). 
This zero is negative, of multiplicity $2$ and is 
the rightmost of its real zeros. The rest of them are simple. The function 
$\theta (\tilde{q}_j,.)$ has a local minimum at $y_j$. As $q$ increases and 
passes from $\tilde{q}_j^-$ to $\tilde{q}_j^+$, the rightmost two real zeros 
coalesce and give birth to a complex conjugate pair.

The double zero of $\theta (\tilde{q}_1,.)$ equals $-7.5032559833\ldots$. 
The spectral value $\tilde{q}_1$ is of interest in the context of 
a problem due to Hardy, Petrovitch and Hutchinson, 
see \cite{Ha}, \cite{Pe}, \cite{Hu}, \cite{Ost}, 
\cite{KaLoVi} and~\cite{KoSh}. The following asymptotic formula and limit  
are proved in ~\cite{Ko3}:

\begin{equation}\label{asold}
\tilde{q}_j=1-\pi /2j+o(1/j)~~,~~
\lim _{j\rightarrow \infty}y_j=-e^{\pi}=-23.1407\ldots
\end{equation}
In the present paper we make this result more precise:

\begin{tm}\label{basictm}
The following asymptotic estimates hold true:

\begin{equation}\label{asnew}
\begin{array}{rcl}\tilde{q}_j&=&1-\pi /2j+(\log j)/8j^2+b/j^2+o(1/j^2)\\ \\ 
y_j&=&-e^{\pi}e^{-(\log j)/4j+\alpha /j+o(1/j)}~~,\end{array}
\end{equation} 
$$\begin{array}{llcl}
{\it where}~~~~&b\in [1.735469700\ldots ,3.327099360\ldots ]&
~~~~{\it and}~~~~&  
\alpha =-\pi /4-2b+\pi ^2/4\\ \\ 
{\it hence}~~~~& 
\alpha \in [-4.972195782\ldots ,-1.788936462\ldots ]~.&&\end{array}$$ 
\end{tm}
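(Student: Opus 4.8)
The plan is to work with the two defining equations for a double zero, namely $\theta(\tilde q_j,y_j)=0$ and $\theta_x(\tilde q_j,y_j)=0$, and to extract the next-order terms in the known expansions $\tilde q_j=1-\pi/2j+o(1/j)$, $y_j\to -e^\pi$. First I would introduce the scaled variables suggested by the leading asymptotics: write $\tilde q_j=1-\pi/2j+u_j/j^2$ and $y_j=-e^\pi e^{v_j/j}$, so that the claim becomes $u_j=(\log j)/8+b+o(1)$ and $v_j=-(\log j)/4+\alpha+o(1)$ with $\alpha$ tied to $b$ by the stated affine relation. I expect the relation $\alpha=-\pi/4-2b+\pi^2/4$ to drop out almost formally once the two equations are written to the relevant order, since differentiating the leading balance that produces $y_j\to -e^\pi$ links a perturbation in $q$ to a perturbation in $\log(-y_j)$; so the real content is the determination of the $(\log j)/8j^2$ term and the bracket for $b$.

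Next I would need precise asymptotics, as $q\to 1^-$, of $\theta(q,x)$ near the relevant zero, i.e. for $x$ near $-e^\pi$. The standard tool here is to split the sum $\sum_j q^{j(j+1)/2}x^j$ and compare it with a Gaussian/theta-type integral: with $q=e^{-\varepsilon}$, $\varepsilon\sim\pi/2j\to 0$, the summand $q^{j(j+1)/2}x^j=\exp(-\varepsilon\,k(k+1)/2+k\log x)$ (for $x<0$ one tracks $|x|$ and the sign $(-1)^k$ separately) is governed by a saddle near $k_*\approx \log|x|/\varepsilon$, and Euler–Maclaurin or a Poisson-summation/Jacobi-theta transformation converts the sum into $\sqrt{2\pi/\varepsilon}\,e^{(\log x)^2/2\varepsilon}\cdot(\text{theta factor})(1+O(\varepsilon))$ plus exponentially small corrections. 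The rightmost real zero corresponds to the first sign change of the oscillatory theta factor, which is what pins $\log(-y_j)$ to $\pi$ at leading order; pushing this expansion one further order in $\varepsilon$ (equivalently in $1/j$), and carefully keeping the $\log j$-type term that arises because $\varepsilon\sim\pi/2j$ means $1/\varepsilon$ and $\log(1/\varepsilon)\sim\log j$ both appear, is where the $(\log j)/8j^2$ and $(\log j)/4j$ terms come from. I would organize this via a lemma giving $\theta(q,x)$ and $\theta_x(q,x)$ in the regime $q\to 1$, $x\sim -e^\pi$ up to $o$ of the needed order; most likely the paper has already set up such machinery (generating-function/Jacobi-theta comparisons) that I can cite.

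The main obstacle is the appearance of an \emph{interval} for $b$ rather than an exact value: this signals that the error terms in the available asymptotic expansion of $\theta$ (or of the relevant theta/Gaussian comparison) are not controlled sharply enough to isolate the constant, only to bound it. So the key quantitative step is to produce explicit two-sided bounds — not just $O(\cdot)$ estimates — for the remainder in the Poisson/Euler–Maclaurin step, and then to feed the worst-case upper and lower remainders through the system $\theta=\theta_x=0$ to get the displayed numerical interval $[1.7354\ldots,3.3270\ldots]$. Concretely I would: (i) fix the two equations and eliminate one unknown, reducing to a single scalar equation $F_j(u_j)=0$; (ii) show $F_j(u)=G(u-(\log j)/8)+R_j(u)$ where $G$ is an explicit elementary function (built from $e^\pi$, $\pi$, and the theta constant) with a simple zero at $u=b_0$, and $|R_j|\le$ an explicit decreasing function of $j$; (iii) conclude $u_j=b_0+o(1)$ with $b$ lying in the interval obtained by perturbing $b_0$ by the remainder bound, the stated endpoints being the numerical evaluation; (iv) finally substitute back to get $v_j$ and hence the affine law for $\alpha$, which transfers the same interval to $\alpha$. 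The bracket width then directly reflects how large the uncontrolled remainder is. If tighter control is feasible one would instead get a single value, but I would expect the honest outcome here to be exactly the interval stated, and the bulk of the work to be steps (ii)–(iii): making the remainder bound fully explicit and numerically small enough.
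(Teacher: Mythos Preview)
Your plan is a genuinely different route from the paper's, and while plausible in outline, it is only a sketch: none of the hard analytic steps (the Poisson/Euler--Maclaurin expansion of $\theta$ and $\theta_x$ in the regime $q\to 1^-$, $x\sim -e^\pi$, with explicit two-sided remainder control) is actually carried out, and those are exactly where the substance lies.

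The paper avoids the pair of equations $\theta=\theta_x=0$ altogether. Instead it introduces an auxiliary sequence $\tilde r_j$, defined as the unique solution in $q$ of the \emph{single} equation $\theta(q,-q^{-2j+1/2})=0$. This equation rewrites, after a symmetry trick, as $\psi(q^{1/2})=q^{2j^2}\chi_j(q)$, where $\psi(q)=\prod_{k\ge 1}(1-q^k)/(1+q^k)$ and $\chi_j(q)\in[1/2,1]$. The asymptotics of $\psi$ near $1$ (refined via the Jacobi triple product to $\psi(q)=e^{K}(1+o(1))(1-q)^{-1/2}e^{\pi^2/4(q-1)}$ with $K\in[D,D+1/12]$) then yield $\tilde r_j=1-\pi/2j+(\log j)/8j^2+b^*/j^2+o(1/j^2)$ with $b^*$ in the narrow bracket $[1.7354\ldots,1.7563\ldots]$. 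Separately the paper proves the sandwich $\tilde r_j\le \tilde q_j\le \tilde r_{j+1}$ for large $j$, and Theorem~\ref{basictm} follows by squeezing.

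This matters for your diagnosis of the interval. You attribute the bracket for $b$ to uncontrolled remainders in an asymptotic expansion; in the paper, however, almost all of the width of $[1.7354\ldots,3.3270\ldots]$ --- namely $\pi/2\approx 1.571$ --- comes from the sandwich $\tilde r_j\le\tilde q_j\le\tilde r_{j+1}$ (since $\tilde r_{j+1}$ has coefficient $b^*+\pi/2$ in place of $b^*$), and only the residual $\approx 0.02$ comes from the remainder bound on $K$. So your expectation that sharpening a remainder would collapse the interval to a point is not how the argument actually runs; a direct attack along your lines, if it succeeded, might well give a \emph{better} constant than the paper, but reproducing the stated endpoints would require rediscovering exactly the $\pi/2$ slack of the sandwich, which your method has no reason to produce. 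In short: the plan is reasonable but unexecuted, takes a different path, and misidentifies the source of the interval claimed in the theorem.
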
 

The first several numbers $y_j$ form a monotone decreasing sequence. 
We list the first five of them:

$$-7.5\ldots~~,~~-11.7\ldots~~,~~-14.0\ldots~~,~~-15.5\ldots~~,
~~-16.6\ldots~~.$$
The theorem implies that for $j$ large enough 
the sequence must also be decreasing and gives an idea about the rate 
with which the sequences $\{ \tilde{q}_j\}$ and $\{ y_j\}$ tend to their 
limit values. 
\vspace{2mm}

{\bf Acknowledgement.} The author has discussed partial theta functions with 
B.Z.~Shapiro during his visits to the University of Stockholm and with 
A.~Sokal by e-mail. V.~Katsnelson has sent to the author an earlier version 
of his paper~\cite{Ka}. To all of them 
the author expresses his most sincere gratitude.

\section{Plan of the proof of 
Theorem~\protect\ref{basictm}}

For $q\in (0,\tilde{q}_1)$ the function $\theta (q,.)$ has only simple 
negative zeros which we denote by $\xi _j$, where $\cdots <\xi _2<\xi _1<0$ 
(see \cite{KoSh} and \cite{Ko2}). The following 
equality holds true for all values of $q\in (0,1)$ (see~\cite{Ko4}):

\begin{equation}\label{product}
\theta (q,x)=\prod _{j=1}^{\infty}(1-x/\xi _j)
\end{equation} 
For $q\in [\tilde{q}_1,1)$ the indexation of the zeros is 
such that zeros change continuously as $q$ varies. 

For $q\in (0,\tilde{q}_1)$ all derivatives 
$(\partial ^k\theta /\partial x^k)(q,.)$ have only simple negative zeros. 
For $k=1$ this means that the numbers $t_s$ and $w_s$, where the function 
$\theta (q,.)$ has respectively local minima and maxima, satisfy the string 
of inequalities

\begin{equation}\label{string1}
\cdots <t_{s+1}<\xi _{2s+1}<w_s<\xi _{2s}<t_s<\xi _{2s-1}<\cdots <0~.
\end{equation}
The above inequalities hold true for any $q\in (0,1)$ 
whenever $\xi _{2s-1}$ is real negative (which implies that this is also 
the case of $\xi _j$ for $j>2s-1$). 

\begin{lm}\label{lmtsws}
Suppose that $q\in (\tilde{q}_j,\tilde{q}_{j+1}]$ (we set 
$\tilde{q}_0=0$). Then for $s\geq j+1$ one has $t_{s+1}\leq w_s/q$ and 
$w_s\leq t_s/q$.
\end{lm}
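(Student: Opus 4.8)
The plan is to exploit the functional equation~(\ref{FE}) in the form that relates $\theta(q,\cdot)$ to $\theta(q,q\,\cdot)$, together with the sign patterns encoded in the string of inequalities~(\ref{string1}). Write $\psi(x):=\theta(q,qx)$, so that~(\ref{FE}) gives $\theta(q,x)=1+qx\,\psi(x)$ and hence $\theta'(q,x)=q\psi(x)+qx\psi'(x)$, where $\prime$ denotes $\partial/\partial x$. The zeros of $\psi$ are exactly the numbers $\xi_i/q$, and the critical points of $\psi$ are exactly the numbers $t_i/q$ (local minima of $\psi$) and $w_i/q$ (local maxima of $\psi$), by a trivial rescaling of the corresponding facts for $\theta(q,\cdot)$. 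So the lemma asks us to compare the critical points of $\theta(q,\cdot)$ with the scaled critical points of the same function.

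First I would record the hypothesis: for $q\in(\tilde q_j,\tilde q_{j+1}]$ the zeros $\xi_{2j+1},\xi_{2j+2},\dots$ are real and negative (the first $2j$ zeros having possibly become complex), so by the last sentence before the lemma the inequalities~(\ref{string1}) hold for the indices $s\ge j+1$; in particular all the points $t_{s+1},w_s,t_s$ for $s\ge j+1$ are genuine local extrema lying among these real negative zeros. The key step is then a sign comparison: I would evaluate $\theta'(q,\cdot)$, or rather the auxiliary function built from it, at the point $w_s/q$ and at the point $t_s/q$, and compare with its sign at $w_s$ and $t_s$. Concretely, since $w_s/q$ is a critical point of $\psi$, we get $\theta'(q,w_s/q)=q\,\psi(w_s/q)=q\,\theta(q,w_s)$; similarly $\theta'(q,t_s/q)=q\,\theta(q,t_s)$. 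Now $w_s$ is a point of local maximum of $\theta(q,\cdot)$ lying to the right of the double-zero region, so $\theta(q,w_s)>0$; and $t_s$ is a point of local minimum, so for $q$ in the relevant range $\theta(q,t_s)<0$ (it is $\le 0$, with equality only at spectral values, which for these indices $s\ge j+1$ it is not). Hence $\theta'(q,w_s/q)>0$ and $\theta'(q,t_s/q)<0$.

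Finally I would convert these sign statements into the claimed inequalities by locating $w_s/q$ and $t_s/q$ relative to the extrema of $\theta(q,\cdot)$ using~(\ref{string1}). On the interval $(w_s,\xi_{2s-1})$ — more precisely between consecutive extrema — $\theta'(q,\cdot)$ has a definite sign, changing sign exactly at $t_s$ and $w_s$; since $\theta'(q,t_s/q)<0$ and $t_s/q<t_s$ (as $q\in(0,1)$ and $t_s<0$), one checks from the sign pattern of $\theta'(q,\cdot)$ that $t_s/q$ must lie in an interval forcing $w_{s+1}$... here is the slightly delicate bookkeeping: I claim $\theta'(q,\cdot)<0$ on $(\,\cdot\,,\,\cdot\,)$ precisely on the arc immediately to the left of $t_s$, i.e. on $(w_s,t_s)$, so $\theta'(q,t_s/q)<0$ with $t_s/q$ to the left of $t_s$ forces $t_s/q\ge w_s$, which is the second asserted inequality $w_s\le t_s/q$. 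Similarly $\theta'(q,\cdot)>0$ on the arc immediately to the right of $w_s$, namely on $(t_{s+1},w_s)$; since $\theta'(q,w_s/q)>0$ and $w_s/q<w_s$, we get $w_s/q\ge t_{s+1}$, i.e. $t_{s+1}\le w_s/q$, the first asserted inequality. The main obstacle I anticipate is making this last localization rigorous and uniform in $s$: one must be sure that $t_s/q$ and $w_s/q$ do not overshoot past a further extremum (e.g. that $t_s/q$ does not land to the left of $w_s$ into yet another sign-change region), which requires a quantitative comparison showing $q$ is close enough to $1$ — guaranteed by $s\ge j+1$ and $q>\tilde q_j$ — so that the dilation by $1/q$ moves each extremum by less than the gap to the next one. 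That uniformity, rather than the sign computation itself, is where the care is needed.
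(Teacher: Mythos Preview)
Your approach is exactly the paper's: differentiate the functional equation~(\ref{FE}) to obtain
\[
(\partial\theta/\partial x)(q,x)=q^2x\,(\partial\theta/\partial x)(q,qx)+q\,\theta(q,qx),
\]
evaluate at $x=t_s/q$ and $x=w_s/q$ (so that the first term on the right vanishes), read off the sign of $\theta'(q,t_s/q)=q\,\theta(q,t_s)\le 0$ and $\theta'(q,w_s/q)=q\,\theta(q,w_s)\ge 0$, and then locate $t_s/q$, $w_s/q$ among the critical points via~(\ref{string1}). The paper does not spell out the ``overshoot'' step either; it writes this in two lines.

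Two small corrections. First, your claim that $\theta(q,t_s)<0$ strictly is false at the right endpoint of the parameter interval: for $q=\tilde q_{j+1}$ and $s=j+1$ the point $t_{j+1}$ \emph{is} the double zero $y_{j+1}$, so $\theta(q,t_{j+1})=0$. Use $\le 0$, as the paper does; the non-strict conclusion $w_s\le t_s/q$ is what is stated.

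Second, your diagnosis of the overshoot obstacle is right, but your proposed cure --- a quantitative bound exploiting that $q$ is close to~$1$ --- points in the wrong direction and is not needed. The localisation follows directly from~(\ref{string2}) (which for $q\in(\tilde q_j,\tilde q_{j+1}]$ holds for the real zeros $\xi_i$, $i\ge 2j+1$): one has $\xi_{i+1}<\xi_i/q<\xi_i$ for the relevant indices. Combined with~(\ref{string1}) this gives
\[
t_s/q>\xi_{2s}/q>\xi_{2s+1}>t_{s+1}\quad\text{and}\quad w_s/q>\xi_{2s+1}/q>\xi_{2s+2}>w_{s+1},
\]
so $t_s/q\in(t_{s+1},t_s)$ and $w_s/q\in(w_{s+1},w_s)$. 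On these intervals $\theta'(q,\cdot)$ changes sign exactly once (at $w_s$ and $t_{s+1}$ respectively), and the sign information you computed pins down the position. No asymptotics in $q$ are required.
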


\begin{proof}
Equality (\ref{FE}) implies 

\begin{equation}\label{qxqxq}
(\partial \theta /\partial x)(q,x)=xq^2(\partial \theta /\partial x)(q,qx)
+\theta (q,qx)~.
\end{equation}
When $qx=t_s$ and $s\geq j+1$, then $\theta (q,t_s)\leq 0$,  
$(\partial \theta /\partial x)(q,t_s)=0$ and 
$(\partial \theta /\partial x)(q,t_s/q)\leq 0$. Hence the local maximum is 
to the left of or exactly at $t_s/q$, i. e. $w_s\leq t_s/q$. 

In the same way if one sets $qx=w_s$, one gets $\theta (q,w_s)\geq 0$,  
$(\partial \theta /\partial x)(q,w_s)=0$ and 
$(\partial \theta /\partial x)(q,w_s/q)\geq 0$ and hence $t_{s+1}\leq w_s/q$.
\end{proof} 

\begin{nota}\label{notar}
{\rm (1) In what follows we are using the numbers 
$u_s:=-q^{-2s+1/2}$ and $v_s:=-q^{-2s-1/2}$.

(2) We denote by $\tilde{r}_s$ the solution to the equation 
$\theta (q,u_s)=0$ and we set $z_s:=-(\tilde{r}_s)^{-2s+1/2}$.}
\end{nota} 

\begin{rem}
{\rm For $s$ sufficiently large the equation $\theta (q,u_s)=0$ has 
a unique solution. 
This follows from part (2) of Lemma~\ref{lmpsi}.}
\end{rem}

It is shown in \cite{Ko2} that

\begin{equation}\label{string2}
\cdots<\xi _4<u_2<\xi _3<-q^{-3}<\xi _2/q<v_1<\xi _1/q<-q^{-2}<\xi _2
<u_1<\xi _1<-q^{-1}<0~.
\end{equation}
These inequalities hold true for $q>0$ sufficiently small and for 
any $q\in (0,1)$ if the index $j$ of $\xi _j$ is sufficiently large.  

Comparing the inequalities (\ref{string1}) 
and (\ref{string2}) we see that $\xi _{2s+1}<w_s,v_s<\xi _{2s}$ and 
$\xi _{2s}<t_s,u_s<\xi _{2s-1}$. In this sense we say that a number $u_s$ 
(resp. $v_s$) corresponds to a local minimum (resp. maximum) 
of $\theta (q,.)$.

We prove the following theorems respectively in 
Sections~\ref{prrztm} and~\ref{prrqrtm}:

\begin{tm}\label{rztm}
The following asymptotic estimates hold true:
 
\begin{equation}\label{asnewrz}
\begin{array}{rcl}\tilde{r}_j&=&1-\pi /2j+(\log j)/8j^2+b^*/j^2+o(1/j^2)\\ \\ 
z_j&=&-e^{\pi}e^{-(\log j)/4j+\alpha ^*/j+o(1/j)}~~,\end{array}
\end{equation}
$$\begin{array}{llcl}
{\it where}~~~~&b^*\in [1.735469700\ldots ,1.756303033\ldots ]&
~~~~{\it and}~~~~& 
\alpha ^*=-\pi /4-2b^*+\pi ^2/4\\ \\ 
{\it hence}~~~~&\alpha ^*\in 
[-1.830603128\ldots ,-1.788936462\ldots ]~.&&\end{array}$$
\end{tm}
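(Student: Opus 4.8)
The plan is to analyze the equation $\theta(q,u_s)=0$ directly, where $u_s=-q^{-2s+1/2}$, and to extract from it a functional relation that pins down $\tilde r_s$ to the required precision. The starting point is the functional equation (\ref{FE}), iterated so as to split $\theta(q,x)$ at the ``central'' index near $2s$: writing $\theta(q,x)=\sum_{j=0}^{\infty}q^{j(j+1)/2}x^j$ and substituting $x=u_s=-q^{-2s+1/2}$, the general term becomes $(-1)^j q^{j(j+1)/2-(2s-1/2)j}=(-1)^j q^{(j^2-(4s-2)j)/2+j/4}$, whose exponent is minimized near $j=2s-1$. First I would perform the change of summation index $j=2s-1+m$ and expand the exponent as a quadratic in $m$, so that $\theta(q,u_s)=0$ becomes, up to an overall power of $q$, an equation of the shape $\sum_{m\in\mathbb Z}(-1)^m q^{m^2/2+cm}(1+\text{corrections})=0$ (with the sum effectively finite on the left because the lower tail $m<-(2s-1)$ is absent, contributing an exponentially small error for $s$ large). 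This is a theta-like alternating sum; setting $q=e^{-\varepsilon}$ with $\varepsilon\to 0$ one recognizes, after a Poisson/Jacobi transformation, that the dominant balance forces $\varepsilon\sim\pi/2s$, which already reproduces (\ref{asold}).

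Next I would push the expansion one order further. Writing $q=1-\pi/2s+\gamma_s/s^2+\cdots$ and carrying the $1/s^2$ corrections through the exponent $m^2/2+cm$ (where $c$ itself depends on $\tilde r_s$ through the definition $z_s=-(\tilde r_s)^{-2s+1/2}$ and on the quartic term $j/4$ in the exponent above), the zero condition turns into a transcendental equation for $\gamma_s$ of the form $F(\gamma_s)+(\log s)/8=0$ modulo $o(1)$. The $(\log s)/8$ term arises precisely from the Jacobi transformation of the Gaussian sum, whose normalizing factor contributes $\tfrac12\log(2s/\pi)$ and hence, after dividing by the appropriate power of $s$, a $(\log s)/8s^2$ shift in $\tilde r_s$; this is the term written explicitly in (\ref{asnewrz}). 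Solving this equation gives $b^*$ as the value of a convergent series (the ``corrections'' collected above), and the two-sided numerical bounds $b^*\in[1.7354\ldots,1.7563\ldots]$ come from truncating that series and bounding the tail — the paper evidently has a lemma (referenced as Lemma~\ref{lmpsi}) controlling $\theta(q,u_s)$ and its derivative that supplies exactly these estimates. Finally, the asymptotics for $z_s$ follow by substituting the expansion of $\tilde r_s$ into $z_s=-(\tilde r_s)^{-2s+1/2}$: taking logarithms, $\log(-z_s)=(-2s+1/2)\log\tilde r_s=(-2s+1/2)\log(1-\pi/2s+(\log s)/8s^2+b^*/s^2+o(1/s^2))$, and expanding the logarithm yields $\pi - (\log s)/4s + \alpha^*/s + o(1/s)$ with $\alpha^*=-\pi/4-2b^*+\pi^2/4$ by a direct bookkeeping of the $1/s$ terms (the $\pi^2/4$ coming from the square of the $\pi/2s$ term, the $-\pi/4$ from cross terms with the $1/2$ in $-2s+1/2$, and $-2b^*$ from the $b^*/s^2$ correction multiplied by $-2s$).

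The main obstacle I expect is making the passage from the formal alternating Gaussian sum to a genuine equation with controlled error rigorous, i.e.\ justifying that the neglected tails (both the truncated lower tail $m<-(2s-1)$ and the higher-order terms in the expansion of the exponent, which are cubic and worse in $m/\sqrt s$) really are $o(1/s^2)$ after the Jacobi transformation, rather than merely formally small. This requires uniform estimates on $\theta(q,u_s)$ in a shrinking neighborhood of $\tilde q_j$, which is presumably the role of Lemma~\ref{lmpsi}; with that lemma in hand the argument reduces to the implicit-function-theorem step of solving $F(\gamma_s)+(\log s)/8=o(1)$ and reading off the numerical interval for $b^*$.
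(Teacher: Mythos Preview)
Your outline heads in a workable direction but it is not the paper's route, and one step is genuinely mis-identified.

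\textbf{How the paper actually argues.} The paper does not work with a raw Gaussian sum and Poisson summation. Instead it rewrites the equation $\theta(q,u_s)=0$ (this is done in \cite{Ko3}) as
\[
\psi(q^{1/2})=q^{2s^2}\chi_s(q),\qquad \psi(q)=\prod_{j\ge 1}\frac{1-q^j}{1+q^j},\qquad \chi_s(q)=\sum_{j\ge 0}(-1)^jq^{(j^2+4js)/2},
\]
and then feeds in two separately proved ingredients: (i) the precise asymptotic of $\psi$ near $q=1$ from Proposition~\ref{moreprecise}, namely $\psi(q)=e^{K}(1+o(1))(1-q)^{-1/2}e^{\pi^2/(4(q-1))}$ with $K\in[D,D+1/12]$, $D=\tfrac12+\log 2+\tfrac{\pi^2}{8}$; and (ii) the elementary facts $\chi_s(q)\in[1/2,1]$ and $\chi_s\to 1/2$ from Lemma~\ref{lmpsi}. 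Taking logarithms of both sides with $q=1-h_s/s$ and expanding in three successive passes ($h_s=\pi/2+d_s$, then $d_s=-(\log s)/8s+g_s$) yields $b^*=-M$ with $M=(\log(\pi/16))/8-3\pi^2/32-K/4$. The $(1-q)^{-1/2}$ factor in $\psi$ is exactly where the $(\log s)/8s^2$ term comes from.

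\textbf{Where your sketch diverges and where the gap is.} Your Poisson/Jacobi step is morally equivalent to deriving Proposition~\ref{moreprecise} on the fly, so the approaches are cousins; but the crucial point you miss is the \emph{source of the numerical interval} for $b^*$. It does not come from ``truncating a convergent series and bounding the tail,'' nor from Lemma~\ref{lmpsi} (that lemma only supplies $\chi_s\to 1/2$ and uniqueness of $\tilde r_s$). The interval $[1.7354\ldots,1.7563\ldots]$ is inherited entirely from the interval $K\in[D,D+1/12]$ for the constant in the $\psi$-asymptotic, and that $1/12$ is the content of Lemma~\ref{lmtau3}, a nontrivial combinatorial inequality $h(q)-\tau(q)\le(1-q)/12$ proved via telescoping estimates on the partial sums $S_l=1+q+\cdots+q^l$. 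Your outline gives no mechanism that would produce this specific width; a bare Poisson remainder bound would give some interval, but not this one, and the paper's number $1/12$ is what fixes the endpoints stated in the theorem. So to complete your route you would still need to prove an estimate equivalent to Proposition~\ref{moreprecise}/Lemma~\ref{lmtau3}, which is the real work.

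Your final paragraph on $z_s$ is fine and matches the paper exactly: substitute $\tilde r_s$ into $z_s=-(\tilde r_s)^{-2s+1/2}$, expand $\log(1-\Phi)$, and read off $\alpha^*=-\pi/4-2b^*+\pi^2/4$.
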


\begin{tm}\label{rqrtm}
For $j$ sufficiently large one has 
$0<\tilde{r}_j\leq \tilde{q}_j\leq \tilde{r}_{j+1}<1$.
\end{tm}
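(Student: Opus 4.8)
The plan is to prove Theorem~\ref{rqrtm} by comparing the sign behaviour of the partial theta function at the explicitly defined points $u_s=-q^{-2s+1/2}$ with the location of its actual extrema, exploiting the fact that $u_s$ ``corresponds'' to the local minimum at $t_s$ in the sense made precise after~(\ref{string2}). Recall that $\tilde{q}_j$ is characterised by the condition that the rightmost local minimum of $\theta(\tilde{q}_j,.)$, which is the one indexed so that it sits between $\xi_{2j}$ and $\xi_{2j-1}$, has value exactly $0$; for $q$ slightly below $\tilde{q}_j$ this minimum value is positive and for $q$ slightly above it is negative. Similarly $\tilde{r}_j$ is defined by $\theta(q,u_j)=0$, i.e. it is the value of $q$ at which the \emph{test point} $u_j$ (rather than the true minimiser $t_j$) is a zero. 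So the whole theorem reduces to controlling, for $q$ near these values, the sign of $\theta(q,t_j)$ versus the sign of $\theta(q,u_j)$, together with a monotonicity-in-$q$ statement.

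First I would establish the requisite monotonicity: for $j$ large and $q$ in the relevant range, the value $\theta(q,t_j(q))$ of $\theta$ at its $j$-th local minimum is a strictly decreasing function of $q$ (this is implicit in the description in the Introduction of how the two rightmost zeros coalesce as $q$ passes $\tilde{q}_j$), and likewise $q\mapsto \theta(q,u_j)$ changes sign from $+$ to $-$ as $q$ increases through $\tilde{r}_j$; the Remark after Notation~\ref{notar} (citing part~(2) of Lemma~\ref{lmpsi}) gives uniqueness of the zero $\tilde{r}_j$ for $s$ large, which is what makes $\tilde{r}_j$ well defined and isolates the sign change. Given these monotonicities, to show $\tilde{r}_j\le\tilde{q}_j$ it suffices to show that $\theta(\tilde{r}_j, t_j(\tilde{r}_j))\ge 0$, i.e. that at the parameter value where the test point is a zero, the true minimum is still nonnegative; and to show $\tilde{q}_j\le \tilde{r}_{j+1}$ it suffices to show that $\theta(\tilde{r}_{j+1}, t_j(\tilde{r}_{j+1}))\le 0$, i.e. that by the time we reach the parameter making $u_{j+1}$ a zero, the $j$-th true minimum has already gone negative. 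The strict inequalities $\tilde{r}_j<1$ come for free since $\tilde{q}_j<1$ and $\tilde{r}_j\le\tilde{q}_j$, and $0<\tilde{r}_j$ since $\tilde{r}_j$ is close to $\tilde{q}_j>0$; alternatively both bounds follow directly from the asymptotics in Theorem~\ref{rztm}.

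The key geometric input for the two sign comparisons is Lemma~\ref{lmtsws} together with the nesting~(\ref{string1})--(\ref{string2}). Since $u_j$ lies strictly between $\xi_{2j}$ and $\xi_{2j-1}$, and the true local minimiser $t_j$ also lies in $(\xi_{2j},\xi_{2j-1})$, the sign of $\theta(q,u_j)$ relative to $\theta(q,t_j)$ is governed by whether $u_j$ lies to the left or to the right of the two zeros of $\theta(q,.)$ adjacent to that minimum. When $\theta(q,u_j)=0$, the point $u_j$ is itself one of those adjacent zeros, so the minimum value $\theta(q,t_j)$ is negative — wait, that would need $u_j$ to be strictly inside the ``dip''; the correct reading is that if $u_j$ is a zero then the minimum at $t_j$ is $\le 0$, with equality only at $\tilde{q}_j$ itself, which already pins $\tilde{r}_j$ extremely close to $\tilde{q}_j$. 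To convert such qualitative statements into the clean inequality $\tilde{r}_j\le\tilde{q}_j\le\tilde{r}_{j+1}$ I would use the quantitative control provided by the $q$-dependence: I would differentiate the functional equation~(\ref{FE}), as already done in~(\ref{qxqxq}), to relate $\theta(q,u_j)$ and $\theta(q,u_{j+1})$ through values of $\theta$ and its $x$-derivative at the scaled points $qu_j$, and use Lemma~\ref{lmtsws}'s inequalities $w_s\le t_s/q$ and $t_{s+1}\le w_s/q$ to sandwich the extrema between consecutive scaled test points. The point is that $qu_j = -q^{-2j+3/2}$ sits in the same ``slot'' as $u_{j-1}$, so iterating the functional equation walks one through consecutive minima in a controlled way.

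The hard part will be making the sign comparison at the \emph{true} minimiser $t_j$ rather than at the test point $u_j$ fully rigorous: a priori $|t_j - u_j|$ is not negligible, and $\theta(q,.)$ near its minimum is flat, so one needs an honest estimate of $\theta(q,t_j)-\theta(q,u_j)$ — presumably of the form $\tfrac12 \theta''(q,\eta)(t_j-u_j)^2$ for some $\eta$ between them — and a comparison of this with the rate at which $\theta(q,u_j)$ crosses zero in $q$. I expect the paper handles this via the auxiliary function(s) $\psi$ referenced in Lemma~\ref{lmpsi}, which presumably gives both the uniqueness of $\tilde r_j$ and effective bounds on the second derivative / on how sharply the minimum value decreases, so that the $O(1/j)$-scale separation between $\tilde q_j$ and $\tilde r_j,\tilde r_{j+1}$ (visible by comparing Theorems~\ref{rztm} and~\ref{basictm}: the $\log j/8j^2$ terms match but the constants $b$ and $b^*$ differ) dominates the error incurred by replacing $t_j$ with $u_j$. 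Once that quantitative gap is in hand, the inequalities $\tilde r_j \le \tilde q_j \le \tilde r_{j+1}$ follow by the monotonicity and sign bookkeeping above, and this sandwich is exactly what lets one transfer the asymptotic expansion of $\tilde r_j$ in Theorem~\ref{rztm} to that of $\tilde q_j$ in Theorem~\ref{basictm}.
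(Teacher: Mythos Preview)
Your sketch contains a sign error that undermines the whole plan. You assert that the $j$-th local-minimum value $\theta(q,t_j(q))$ is strictly \emph{decreasing} in $q$ and that for $q$ just below $\tilde q_j$ this value is positive. The opposite holds: for $q$ just below $\tilde q_j$ the zeros $\xi_{2j},\xi_{2j-1}$ are real and the minimum between them is strictly negative; at $q=\tilde q_j$ the minimum value is exactly $0$; for $q$ just above, those zeros have become a complex pair (this is precisely the ``coalesce and give birth to a complex conjugate pair'' description in the Introduction), so the minimum value is positive. Thus $\theta(q,t_j(q))$ is \emph{increasing} through $0$ at $\tilde q_j$. With the correct sign your ``suffices'' conditions flip: for $\tilde r_j\le\tilde q_j$ one needs $\theta(\tilde r_j,t_j)\le 0$, which is immediate since $t_j$ is the minimiser and $\theta(\tilde r_j,u_j)=0$ --- this is exactly the paper's one-paragraph argument for that inequality. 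But for $\tilde q_j\le\tilde r_{j+1}$ the correct reduction is to $\theta(\tilde r_{j+1},t_j)\ge 0$, and nothing in your sketch addresses this; the second-derivative comparison $\theta(q,t_j)-\theta(q,u_j)\approx\tfrac12\theta''(q,\eta)(t_j-u_j)^2$ is never carried out, and the function $\psi$ you hope will supply the needed bounds plays no role whatsoever in the paper's proof of this theorem.

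The paper's route to $\tilde q_j\le\tilde r_{j+1}$ is entirely different and does not compare $t_j$ with $u_j$ at all. It shows directly that $\theta(\tilde q_j,u_{j+1})<0$, whence the unique zero $\tilde r_{j+1}$ of $q\mapsto\theta(q,u_{j+1})$ lies beyond $\tilde q_j$. Three ingredients are used: (i) Katsnelson's convergence result $\theta(q,x)\to 1/(1-x)$ on $(-e^\pi,0]$ as $q\to 1^-$, which forces $|y_j|>23$ for large $j$; (ii) the estimate $\theta(\tilde q_j,v_j)>1/3$ (Proposition~\ref{propqv}), proved via a geometric ``secant'' lemma (Lemma~\ref{above}) asserting that on $[w_j,-\tilde q_j^{-2j}]$ the graph of $\theta(\tilde q_j,\cdot)$ lies above the chord, together with the elementary limit $\Xi_j\to 1/2$ (Lemma~\ref{lm1/2}); and (iii) a single application of the functional equation~(\ref{FE}) with $x=u_{j+1}$, $qx=v_j$, giving $\theta(\tilde q_j,u_{j+1})=1+\tilde q_j u_{j+1}\,\theta(\tilde q_j,v_j)<1-0.3\cdot 23\cdot\tfrac13<0$. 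None of these ideas --- Katsnelson's result, the convexity-type Lemma~\ref{above}, or the explicit numerical bound $1/3$ at $v_j$ --- appears in your outline.
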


Theorem~\ref{basictm} follows from the above two theorems. Indeed, as 

$$\begin{array}{rccccc}
1-\pi /2j+(\log j)/8j^2+O(1/j^2))&=&
\tilde{r}_j&\leq &\tilde{q}_j&{\rm and}\\ \\ 
1-\pi /2(j+1)+(\log (j+1))/8(j+1)^2+O(1/(j+1)^2)&=&\tilde{r}_{j+1}&\geq& 
\tilde{q}_j\end{array}$$
one deduces immediately the equality $\tilde{q}_j=1-\pi /2j+o(1/j)~(A)$. It is 
also clear that $\tilde{r}_{j+1}=1-\pi /2j+(\log j)/8j^2+O(1/j^2)$.  

To obtain an estimate of the term $O(1/j^2)$ recall that 

$$\begin{array}{cclc}
\tilde{r}_j&=&1-\pi /2j+(\log j)/8j^2+b^*/j^2+o(1/j^2)&{\rm hence}\\ \\ 
\tilde{r}_{j+1}&=&1-\pi /2j+(\log j)/8j^2+(b^*+\pi /2)/j^2+o(1/j^2)
\end{array}$$ 
(we use the equality $1/(j+1)=1/j-1/j(j+1)$). 
This implies $\tilde{q}_j=1-\pi /2j+(\log j)/8j^2+b/j^2+o(1/j^2)$, 
where $b\in [b^*,b^*+\pi /2]$ hence 
$b\in [1.735469700\ldots ,3.327099360\ldots ]$. The 
quantities $\alpha$ and $\alpha ^*$ 
are expressed by similar formulas via $b$ and $b^*$, 
see Theorems~\ref{basictm} and \ref{rztm}. 
This gives the closed intervals to which 
$\alpha$ and $\alpha ^*$ belong.

Section \ref{secpsi} contains properties of the function $\psi$ used in the 
proofs. At first reading one can read only the statements of 
Theorem~\ref{recallpsi} and Proposition~\ref{moreprecise} from that section.

\section{Proof of Theorem~
\protect\ref{rqrtm}\protect\label{prrqrtm}}

We prove first the inequality $\tilde{r}_j\leq \tilde{q}_j$. When $q$ increases 
and becomes equal to a spectral value $\tilde{q}_j$, then two 
negative zeros of $\theta$ coalesce. The corresponding double zero 
of $\theta (\tilde{q}_j,.)$ is a local minimum. It equals $t_j$. Hence for 
some value of $q$ not greater than $\tilde{q}_j$ one has 
$\theta (q,u_j)=0$. This value is $\tilde{r}_j$, see Notation~\ref{notar}. 

To prove the inequality $\tilde{q}_j\leq \tilde{r}_{j+1}~(*)$ 
we use a result due to V. Katsnelson, see~\cite{Ka}: 
\vspace{2mm}

{\em The sum of the series 
$\sum _{j=0}^{\infty}q^{j(j+1)/2}x^j$ (considered for $q\in (0,1)$ and $x$ 
complex) tends to $1/(1-x)$ 
(for $x$ fixed and as $q\rightarrow 1^-$)  
exactly when 
$x$ belongs to the interior of the closed Jordan curve 
$\{ e^{|s|+is}, s\in [-\pi ,\pi]\}$.} 
\vspace{2mm}

Hence in particular $\theta (q,x)$ converges to $1/(1-x)$ as 
$q\rightarrow 1^-$ for each fixed $x\in (-e^{\pi},0]$. This means that for 
$j$ sufficiently large one has $y_j<-23$ (because the function $1/(1-x)$ has 
no zeros on $(-\infty ,0]$ and $23<e^{\pi}$).   
 
\begin{prop}\label{propqv}
For $j$ sufficiently large one has $\theta (\tilde{q}_j,v_j)>1/3$.
\end{prop}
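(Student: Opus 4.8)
The plan is to reduce $\theta(\tilde q_j,v_j)$ to a rescaled \emph{truncated classical theta series} and estimate it near $q=1$. Since $v_j=-q^{-2j-1/2}$, one has $\theta(q,v_j)=\sum_{k\ge 0}(-1)^kq^{k(k-4j)/2}$; completing the square, $k(k-4j)/2=(k-2j)^2/2-2j^2$, and with $m=k-2j$ this gives the identity
$$\theta(q,v_j)=q^{-2j^2}\sum_{m\ge -2j}(-1)^mq^{m^2/2}.$$
(Alternatively one iterates (\ref{FE}) once: because $qv_j=u_j$ one gets $\theta(q,v_j)=1+u_j\theta(q,u_j)$, which reduces the claim to an \emph{upper} bound on $\theta(\tilde q_j,u_j)$, and $\theta(q,u_j)$ is handled by the same trick and is exactly what the function $\psi$ of Section~\ref{secpsi} controls.) I would work with the boxed identity.

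Next I would estimate the truncated sum. Write $\sum_{m\ge -2j}(-1)^mq^{m^2/2}=A_j-B_j$, where $A_j:=\sum_{m\in\mathbb Z}(-1)^mq^{m^2/2}$ and $B_j:=\sum_{m\ge 2j+1}(-1)^mq^{m^2/2}$ (using the symmetry $m\mapsto -m$). For $A_j$, setting $q=e^{-2\epsilon}$, Jacobi's modular (Poisson-summation) transformation gives $A_j=\sqrt{\pi/\epsilon}\sum_{n\in\mathbb Z}e^{-(\pi^2/\epsilon)(2n-1)^2/4}$, whose leading term is $2\sqrt{\pi/\epsilon}\,e^{-\pi^2/4\epsilon}$. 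For $B_j$, factoring out the leading term $q^{(2j+1)^2/2}$ exhibits it as an alternating, geometrically decaying tail; since $(2j+1)^2/2-2j^2=2j+1/2$, the quantity $-q^{-2j^2}B_j$ converges to $\sum_{k\ge 0}(-1)^ke^{-(k+1)\pi}=1/(1+e^{\pi})$. These estimates are precisely what Theorem~\ref{recallpsi} and Proposition~\ref{moreprecise} package in uniform form, so I would invoke them rather than redo the Poisson summation by hand.

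Then I would combine the two pieces. Insert $\epsilon=-\tfrac12\log\tilde q_j$; using $\tilde q_j=1-\pi/2j+(\log j)/8j^2+O(1/j^2)$ (available through Theorem~\ref{rztm} via $\tilde r_j\le\tilde q_j$, with the matching upper control) one gets $\epsilon=\pi/4j+O(\log j/j^2)$, so $q^{-2j^2}=e^{\pi j}j^{-1/4}e^{O(1)}$ is astronomically large while $A_j=4j^{1/4}e^{-\pi j}e^{O(1)}$ is astronomically small, and their product tends to the finite constant $4e^{\pi^2/2-4b}$. Adding the $B_j$-contribution, $\theta(\tilde q_j,v_j)\to 1/(1+e^{\pi})+4e^{\pi^2/2-4b}$; since this limit exceeds $1/3$, the inequality $\theta(\tilde q_j,v_j)>1/3$ holds for all sufficiently large $j$. (Note that $1/(1+e^{\pi})$ is just the Katsnelson value $1/(1-x)$ at the boundary point $x=-e^{\pi}$, and $4e^{\pi^2/2-4b}$ is the correction coming from $v_j$ sitting just inside the boundary of the Jordan curve.)

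The main obstacle is exactly this near-exact cancellation of the two double-exponential factors $q^{-2j^2}$ and $A_j$: to evaluate their product one must carry the asymptotic expansion of $\tilde q_j$ (equivalently, pin down the constant $b$) down to the $(\log j)/j^2$ and $1/j^2$ level, so Proposition~\ref{propqv} genuinely rests on the fine analysis behind Theorem~\ref{rztm} and on the $\psi$-estimates being uniform; in particular one needs not only $\tilde q_j\ge\tilde r_j$ but also a matching upper bound on $\tilde q_j$. A secondary subtlety is that the truncation tail $B_j$ is of the same exponential order as the leading term of the full theta series and cannot be discarded — getting its limiting value $1/(1+e^{\pi})$ right is essential for the final constant to clear the threshold $1/3$.
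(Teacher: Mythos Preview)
Your approach has a genuine circularity gap. In the paper's logical structure, Proposition~\ref{propqv} is exactly the ingredient used to prove the upper bound $\tilde q_j\le\tilde r_{j+1}$ in Theorem~\ref{rqrtm}; only \emph{after} that is the expansion $\tilde q_j=1-\pi/2j+(\log j)/8j^2+O(1/j^2)$ available. At the point where Proposition~\ref{propqv} must be proved, the only information about $\tilde q_j$ is the old estimate $\tilde q_j=1-\pi/2j+o(1/j)$ together with the easy one-sided bound $\tilde r_j\le\tilde q_j$. With only $o(1/j)$ precision, your product $q^{-2j^2}A_j$ satisfies $\log(q^{-2j^2}A_j)=\pi j-\pi j+o(j)=o(j)$, which is far too weak to conclude convergence to a finite constant; and the one-sided bound $\tilde q_j\ge\tilde r_j$ goes the wrong way, because $q\mapsto q^{-2j^2}\psi(q^{1/2})$ is \emph{decreasing}, so it only yields an upper bound on the $A_j$-contribution, not the lower bound you need. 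The ``matching upper control'' you invoke is precisely the statement you are supposed to be proving.

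Even setting the circularity aside, your endpoint computation does not clear the threshold. With the range $b\in[1.735\ldots,3.327\ldots]$ that the paper eventually establishes, your limit $1/(1+e^{\pi})+4e^{\pi^2/2-4b}$ can be as small as roughly $0.041+0.001=0.042$, well below $1/3$. So the inequality cannot be extracted from your formula without an independent determination of $b$ to much higher precision than the paper achieves.

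The paper's argument is entirely different and avoids all of this. It uses a convexity-type statement (Lemma~\ref{above}) to show that the point $(v_j,\theta(\tilde q_j,v_j))$ lies above the chord joining $(-\tilde q_j^{-2j},\theta(\tilde q_j,-\tilde q_j^{-2j}))$ to $(w_j,1)$, together with the elementary observation (Lemma~\ref{lm1/2}) that $v_j$ sits asymptotically at the midpoint of that segment. Since the left endpoint value is in $(0,\tilde q_j^{\,2j})$ and the right endpoint value is exactly~$1$, the midpoint value exceeds $1/3$ once $\Xi_j>1/3$. This needs only $\tilde q_j\to 1$; no second-order asymptotics enter.
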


Before proving the proposition we deduce the inequality $(*)$ from it. Recall 
that $\tilde{q}_j\geq \tilde{q}_1>0.3$. Using 
equation (\ref{FE}) one gets 

$$\theta (\tilde{q}_j,u_{j+1})=1+\tilde{q}_ju_{j+1}\theta (\tilde{q}_j,v_j)
<1-0.3\times 23 \times (1/3)=-1.3<0~.$$
Hence for $q=\tilde{q}_j$ the value of $\theta (q,u_{j+1})$ is still 
negative, i. e. one has $\theta (q,u_{j+1})=0$ for some value $q>\tilde{q}_j$. 
   
\begin{proof}[Proof of Proposition~\ref{propqv}]

We deduce the proposition from the following two lemmas:

\begin{lm}\label{lm1/2}
Suppose that the quantity $v_j$ is computed for $q$ equal to the spectral value 
$\tilde{q}_j$. Set $\Xi _j:=(-\tilde{q}_j^{-2j}-v_j(\tilde{q}_j))/
(-\tilde{q}^{-2j}+\tilde{q}^{-2j-1})$. Then $\lim _{j\rightarrow \infty}\Xi _j=1/2$.
\end{lm}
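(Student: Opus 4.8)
The plan is to show that, at $q=\tilde q_j$, the zero $v_j$ sits asymptotically at the midpoint of the interval $[-q^{-2j},-q^{-2j-1}]$, i.e. that $\Xi_j$, which measures the normalized position of $v_j$ in this interval, converges to $1/2$. The natural strategy is to rewrite $\theta(\tilde q_j,\cdot)$ near $v_j$ using the functional equation to peel off the first few terms, rescale the variable by setting $x=-q^{-2j-\tau}$ with $\tau\in[0,1]$ (so $\tau=0$ is the endpoint $u_j$-type point and $\tau=1$ the endpoint $v_j$-type point), and then extract a limiting transcendental equation whose relevant root is $\tau=1/2$. Concretely, I would substitute $x=-\tilde q_j^{-2j-\tau}$ into $\theta(\tilde q_j,x)=0$ and group the terms of the series $\sum_k q^{k(k+1)/2}x^k$ around the index $k=2j$; with $q\to1^-$ and $j\to\infty$ coupled via $\tilde q_j=1-\pi/2j+o(1/j)$ (from (\ref{asold})), the sum should concentrate on finitely many indices near $2j$ and the dominant balance should force a symmetric condition.

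The cleaner route, which I expect the author to take, is to invoke the function $\psi$ and the results of Section~\ref{secpsi} (Theorem~\ref{recallpsi} and Proposition~\ref{moreprecise}) together with the already-established string of inequalities (\ref{string2}), which localizes $v_j$ strictly between $\xi_{2j+1}$ and $\xi_{2j}$ and between $-q^{-2j}$ and $-q^{-2j-1}$. One writes $\theta(\tilde q_j,\cdot)$ in the product form (\ref{product}) restricted to a neighbourhood of $v_j$, or equivalently studies $(\partial\theta/\partial x)(\tilde q_j,v_j)=0$ after the substitution $x=-q^{-2j-1/2}w$, and shows that the resulting equation, in the limit, is an equation in $w$ whose root tends to $1$ — this is exactly the statement that $v_j\sim -q^{-2j-1/2}$, which is precisely $\Xi_j\to 1/2$ after unwinding the definition $\Xi_j=(-\tilde q_j^{-2j}-v_j)/(-\tilde q_j^{-2j}+\tilde q_j^{-2j-1})$ and using $\tilde q_j^{-2j-1/2}=\tfrac12(\tilde q_j^{-2j}+\tilde q_j^{-2j-1})(1+o(1))$ as $q\to1^-$.

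The key computational steps, in order, would be: (i) fix the scaling $x=-\tilde q_j^{-2j-\tau}$ and plug into $\theta(\tilde q_j,x)=0$; (ii) use $\tilde q_j^{k}=\exp(k\log\tilde q_j)$ with $\log\tilde q_j=-\pi/2j+o(1/j)$ to see that the exponents $q^{k(k+1)/2}x^k$, as functions of $k$ near $2j$, behave like a Gaussian centered near $k=2j+\tfrac12-\tau+O(1/j)$, so that after normalization the series becomes, in the limit $j\to\infty$, a fixed convergent series (a Jacobi-theta-type sum) in the variable $\tau$; (iii) identify that this limiting series vanishes exactly at $\tau=1/2$ by a symmetry argument (the limiting sum is invariant, up to an overall factor, under $\tau\mapsto 1-\tau$ combined with an index shift, so its unique zero in $[0,1]$ is the fixed point $1/2$); (iv) conclude $\tau_j\to 1/2$, hence $v_j=-\tilde q_j^{-2j-\tau_j}$ with $\tau_j\to1/2$, and finally translate this into $\Xi_j\to1/2$ using the elementary limit $(\tilde q_j^{-2j-1/2}-\tilde q_j^{-2j})/(\tilde q_j^{-2j-1}-\tilde q_j^{-2j})\to1/2$ as $\tilde q_j\to1^-$.

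The main obstacle will be step (ii)–(iii): making rigorous the claim that the partial-theta series, after the $q\to1^-$ rescaling, converges (uniformly on compact $\tau$-intervals) to a nondegenerate limiting function whose zero can be pinned down exactly, rather than merely estimated. This requires uniform control of the tail of $\sum_k q^{k(k+1)/2}x^k$ under the joint limit, and a genuine reason — not just numerics — that the limiting zero is the \emph{exact} value $1/2$ and not merely $1/2+o(1)$; I expect this to come from the $\psi$-function machinery of Section~\ref{secpsi}, where the relevant symmetry is presumably already encoded, so that Lemma~\ref{lm1/2} reduces to quoting Proposition~\ref{moreprecise} applied at $q=\tilde q_j$ together with the crude asymptotics (\ref{asold}).
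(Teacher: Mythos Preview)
You have misread the definition of $v_j$. In Notation~\ref{notar} the quantity $v_s$ is \emph{defined} to be the explicit number $-q^{-2s-1/2}$; it is not a zero of $\theta$, nor a critical point, nor anything implicitly characterised by an equation involving $\theta$. (The paper merely says that $v_s$ ``corresponds to'' the local maximum $w_s$ in the sense of the inequality chain~(\ref{string1})--(\ref{string2}); it does not say $v_s=w_s$.) Consequently the lemma is a three-line algebraic identity: with $q=\tilde q_j$ one has
\[
\Xi_j=\frac{-q^{-2j}-(-q^{-2j-1/2})}{-q^{-2j}+q^{-2j-1}}
=\frac{q^{-2j}(q^{-1/2}-1)}{q^{-2j}(q^{-1}-1)}
=\frac{\sqrt{q}(1-\sqrt{q})}{1-q}
=\frac{\sqrt{q}}{1+\sqrt{q}}\xrightarrow[j\to\infty]{}\frac12,
\]
since $\tilde q_j\to1^-$. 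This is exactly the paper's proof.

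Your entire programme --- rescaling $x=-\tilde q_j^{-2j-\tau}$, Gaussian concentration of the series near $k=2j$, symmetry arguments for a limiting transcendental equation, and invoking the $\psi$-machinery of Section~\ref{secpsi} --- is aimed at locating an \emph{unknown} point inside the interval $[-q^{-2j-1},-q^{-2j}]$. There is no unknown: $v_j$ sits at the geometric midpoint $-q^{-2j-1/2}$ by definition, and the content of the lemma is simply that the geometric midpoint of $[q^{-2j},q^{-2j-1}]$ is asymptotically the arithmetic midpoint as $q\to1^-$. So the proposal is not a different route to the result; it is built on a misidentification of the object in question and does not constitute a proof of the lemma as stated.
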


The next lemma considers certain points 
of the graph of $\theta (\tilde{q}_j,.)$. Recall that 
$\theta (\tilde{q}_j,w_j)=1$ because for $q=\tilde{q}_j$ one has 
$\theta (\tilde{q}_j,t_j)=0$, $w_j=t_j/\tilde{q}_j$ and by equation (\ref{FE}) 
$\theta (\tilde{q}_j,w_j)=1+\tilde{q}_jw_j\theta (\tilde{q}_j,t_j)=1$.

\begin{lm}\label{above}
The point~~~$(v_j,\theta (\tilde{q}_j,v_j))$~~~lies above or on the straight 
line passing 
through the two points 
$(-\tilde{q}_j^{-2j},\theta (\tilde{q}_j,-\tilde{q}_j^{-2j}))$ 
and $(w_j,\theta (\tilde{q}_j,w_j))=(w_j,1)$.
\end{lm}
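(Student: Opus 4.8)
Throughout fix $q=\tilde q_j$ and abbreviate $\theta=\theta(\tilde q_j,\cdot)$. The geometric content of the lemma is that a point of the graph lies above a chord, so the natural strategy is to exhibit concavity of $\theta$ on the relevant interval and invoke the elementary fact that a concave arc lies above (or on) the chord joining any two of its points. First I would locate the three abscissae. By Notation~\ref{notar} one has $v_j=-\tilde q_j^{-2j-1/2}<-\tilde q_j^{-2j}$, and comparing (\ref{string1}) with (\ref{string2}) places $v_j$, together with the maximum point $w_j$, inside the rightmost hump, on which $\theta>0$ and attains its only maximum, the value $\theta(w_j)=1$. Thus the two chord abscissae are $w_j$ and $-\tilde q_j^{-2j}$, and the point under scrutiny is $v_j$.

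The reduction then requires the ordering $w_j\le v_j<-\tilde q_j^{-2j}$, i.e.\ that $v_j$ lie (weakly) between the two chord abscissae; equivalently $(\partial\theta/\partial x)(v_j)\le 0$, so that $v_j$ sits on the decreasing flank of the hump at or to the right of the peak. Only the inequality $w_j\le v_j$ is delicate: should the peak instead satisfy $w_j>v_j$, the point $v_j$ would lie to the left of both chord abscissae and, $w_j$ being the maximum, one would get $\theta(v_j)<1<{}$(value of the extended chord at $v_j$), i.e.\ the opposite, ``below'', inequality. Establishing $w_j\le v_j$ is therefore indispensable, and it is the heart of the matter.

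Granting the ordering, I would prove $(\partial^2\theta/\partial x^2)\le 0$ on $[w_j,-\tilde q_j^{-2j}]$ using the rescaling $x=-\tilde q_j^{-2j-a}$ that underlies Section~\ref{secpsi}. With $\Phi(a)=\theta(-\tilde q_j^{-2j-a})$ a direct computation gives $\Phi''(a)=(\log\tilde q_j)^2\,x\bigl(x\,\partial_x^2\theta+\partial_x\theta\bigr)$; since on this interval $x<0$ and $\partial_x\theta\le 0$, the bound $\Phi''\le 0$ forces $\partial_x^2\theta\le-\partial_x\theta/x\le 0$. The needed control of $\Phi$ comes from Theorem~\ref{recallpsi} and Proposition~\ref{moreprecise}: the leading behaviour of $\theta$ across the hump is of sine type in $a$, and the large factor $|\log\tilde q_j|^{-1}=O(j)$ produced on each $x$-differentiation multiplies this (positive) sine term, so that $\partial_x^2\theta$ is negative throughout the range $a\in[0,a_w]$, where $a_w$ is the abscissa of $w_j$. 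Feeding this into the concavity principle yields $\theta(v_j)\ge{}$the value at $v_j$ of the line through $(-\tilde q_j^{-2j},\theta(-\tilde q_j^{-2j}))$ and $(w_j,1)$, which is exactly the assertion.

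The main obstacle is the ordering $w_j\le v_j$ of the second paragraph. To leading order the local model is symmetric about $a=\tfrac12$, the abscissa of $v_j$, so it places the maximum $w_j$ exactly there; hence $(\partial\theta/\partial x)(v_j)$ vanishes to leading order and its sign is decided by the first subleading correction of Proposition~\ref{moreprecise}. At the spectral value this correction (and the truncation term coming from the fact that the defining series starts at $j=0$) is of the same exponential order in $j$ as the main term, so the two genuinely compete; the same competition must be watched near the endpoint $-\tilde q_j^{-2j}$, where the sine term degenerates, to be sure the concavity estimate is not overturned there. Extracting these signs from Proposition~\ref{moreprecise} is the crux; the chord/concavity bookkeeping above is then routine.
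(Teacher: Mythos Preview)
Your plan is to show that $\theta(\tilde q_j,\cdot)$ is concave on $[w_j,-\tilde q_j^{-2j}]$ and then invoke the chord inequality. This is not the paper's route, and neither of the two ingredients you need is delivered by the tools you cite.

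The concavity step is unsupported. Theorem~\ref{recallpsi} and Proposition~\ref{moreprecise} concern only the auxiliary function $\psi(q)=1+2\sum_{j\ge1}(-1)^jq^{j^2}$; their link with $\theta$ runs through the single scalar equation $\psi(q^{1/2})=\lambda_s(q)$, which is equivalent to $\theta(q,u_s)=0$ at the one abscissa $u_s=-q^{-2s+1/2}$ (see~(\ref{psilambda1})). Nothing there controls $\partial_x\theta$ or $\partial_x^2\theta$ for $x$ in a neighbourhood of $u_s$, so the ``sine-type'' profile of the hump in the variable $a$ is a heuristic the paper never establishes. Worse, concavity can genuinely fail on part of the interval you need: between the maximum $w_j$ and the double-zero minimum $y_j=t_j$ there is exactly one inflection of $\theta(\tilde q_j,\cdot)$, and placing it to the right of $-\tilde q_j^{-2j}$ is yet another asymptotic comparison you have not carried out. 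The ordering $w_j\le v_j$ that you correctly flag as indispensable is likewise not a consequence of Proposition~\ref{moreprecise}, which says nothing about the location of the local maximum $w_j$.

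The paper sidesteps all of this with a contradiction argument driven by the functional equation. One assumes the middle point lies strictly below the chord, applies $\theta(q,x)=1+qx\,\theta(q,qx)$ twice, and lands in the next hump to the left: the transported triple $(x_0/q^2,x_1/q^2,x_2/q^2)$ again has its middle point below its chord, by the elementary inequality $B(|x_1|+|x_0|)<A(|x_2|+|x_0|)$ (this is~(\ref{cond4})). Iterating, every hump further left carries such a ``below-the-chord'' triple; each one forces two extra inflection points in that hump, so on $(-q^{-2s-2N},-q^{-2s})$ one would count of order $4N$ inflection points instead of the $2N$ the graph of $\theta(\tilde q_j,\cdot)$ actually has. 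That is the contradiction. No concavity, no asymptotic expansion of $\theta$ across the hump, and no delicate comparison of $w_j$ with $v_j$ enter the argument.
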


The two lemmas imply that for $j$ sufficiently large the following 
inequality holds true:

$$\theta (\tilde{q}_j,v_j)-\theta (\tilde{q}_j,-\tilde{q}_j^{-2j})\geq \Xi _j
(\theta (\tilde{q}_j,w_j)-\theta (\tilde{q}_j,-\tilde{q}_j^{-2j}))>
(1/3)(1-\theta (\tilde{q}_j,-\tilde{q}_j^{-2j}))~.$$
Hence $\theta (\tilde{q}_j,v_j)>1/3+
(2/3)\theta (\tilde{q}_j,-\tilde{q}_j^{-2j})$. 
It is shown in \cite{Ko2} (see Proposition~9 there) 
that for $q\in (0,1)$ one has 
$\theta (q,-q^{-s})\in (0,q^s)$, $s\in \mathbb{N}$. Hence 
$\theta (\tilde{q}_j,v_j)>1/3$.
\end{proof}   

\begin{proof}[Proof of Lemma \ref{lm1/2}]
It is clear that $(-\tilde{q}_j^{-2j}-v_j(\tilde{q}_j))/
(-\tilde{q}_j^{-2j}+\tilde{q}_j^{-2j-1})=
\sqrt{\tilde{q}_j}(1-\sqrt{\tilde{q}_j})/(1-\tilde{q}_j)=
\sqrt{\tilde{q}_j}/(1+\sqrt{\tilde{q}_j})$. As 
$j\rightarrow \infty$ one has $\tilde{q}_j\rightarrow 1$ and the above fraction 
tends to $1/2$.
\end{proof}

\begin{proof}[Proof of Lemma~\ref{above}]
We are going to prove a more general statement from which the lemma 
follows.
Suppose that $w_s\leq x_2<x_1<x_0\leq -q^{-2s}<-1<0$ for some 
$s\in \mathbb{N}$. 
Set $\theta (q,x_0)=C$, $\theta (q,x_1)=B$, $\theta (q,x_2)=A$. Suppose 
that $A>B>C>0$ and $A\geq 1$. We use the 
letters $A$, $B$ and $C$ also to denote the points of the graph of 
$\theta (q,.)$ 
with coordinates $(x_0, C)$, $(x_1, B)$ and $(x_2, A)$. 

{\em We prove that the point $B$ is above or on the straight line $AC$.} 

Indeed, suppose that the point $B$ is below the straight line $AC$. Then 

\begin{equation}\label{cond1}
\frac{B-C}{|x_1-x_0|}<\frac{A-B}{|x_2-x_1|}
\end{equation}
Consider the points $(x_0/q, C')$, $(x_1/q, B')$ and $(x_2/q, A')$ 
of the graph of $\theta (q,.)$. Equation (\ref{FE}) implies that 

$$C'=1+x_0C~~,~~B'=1+x_1B~~,~~A'=1+x_2A~.$$
In the same way for the points $(x_0/q^2, C'')$, $(x_1/q^2, B'')$ 
and $(x_2/q^2, A'')$ of the graph of $\theta (q,.)$ one gets 

\begin{equation}\label{GGG}
\begin{array}{cclcclccl}C''&=&1+\frac{x_0}{q}C'&~~~~B''&
=&1+\frac{x_1}{q}B&~~~~A''
&=&1+\frac{x_2}{q}A'\\ \\
&=&1+\frac{x_0}{q}+\frac{x_0^2}{q}C&&=&1+\frac{x_1}{q}+\frac{x_1^2}{q}B&&
=&1+\frac{x_2}{q}+\frac{x_2^2}{q}A~.\end{array}
\end{equation}

It is clear that $x_2/q^2<x_1/q^2<x_0/q^2\leq -q^{-2s-2}$. By Lemma~\ref{lmtsws}  
one has 

$$w_{s+1}\leq t_{s+1}/q\leq w_s/q^2\leq x_2/q^2\leq -q^{-2s-2}~.$$ 
The inequalities 
$A>B>C>0$ and $x_2<x_1<x_0<-1$ imply $A'<B'<C'$, see (\ref{GGG}). As 
$A\geq 1$ and 
$x_2<-1$, one has $A'<0$. Therefore $A''=1+(x_2/q)A'>1$. 

It follows from $w_{s+1}\leq x_2/q^2<x_1/q^2<x_0/q^2\leq -q^{-2s-2}$ that 
$B''>0$ and $C''>0$. (Indeed, $\theta (q,x)>0$ for $x\in (-q^{-2s-1},-q^{-2s})$, 
see \cite{Ko2}.)  
If $B'<C'<0$, then $x_2<x_1<x_0<-1$ implies 
$A''>B''>C''$, see (\ref{GGG}). If $B'<0\leq C'$, then again by (\ref{GGG}) 
one gets $A''>B''>C''$. 

If $0\leq B'<C'$, then one obtains $A''>B''$ and $A''>C''$. If $B''\leq C''$, 
then the point $B''$ lies below the straight line $A''C''$. In this case 
one can find a point $(x_1^{1}/q^2,{B_*}'')$ of the graph of $\theta (q,.)$ such 
that 

$$w_{s+1}\leq x_2/q^2<x_1^{1}/q^2<x_1/q^2<x_0/q^2\leq -q^{-2s-2}<-1<0~~~,~~~ 
A''>{B_*}''>C''>0$$ 
and the point ${B_*}''$ lies below the straight line $A''B''$. 

Suppose that $A''>B''>C''$. 
We show that the point $B''$ is below the straight line $A''C''$. This is 
equivalent to proving that 

$$\frac{(x_1/q)+(x_1^2/q)B-(x_0/q)-(x_0^2/q)C}{|x_1-x_0|/q^2}<
\frac{(x_2/q)+(x_2^2/q)A-(x_1/q)-(x_1^2/q)B}{|x_2-x_1|/q^2}$$
or to proving the inequality

\begin{equation}\label{cond2}
B(x_1^2/x_0^2)|x_2-x_0|-C|x_2-x_1|-A(x_2^2/x_0^2)|x_1-x_0|<0~.
\end{equation}
Inequality (\ref{cond1}) can be given another presentation:

\begin{equation}\label{cond3}
B|x_2-x_0|-C|x_2-x_1|-A|x_1-x_0|<0~.
\end{equation}
One can notice that inequality (\ref{cond2}) (which we want to prove) is 
the sum of inequality (\ref{cond3}) (which is true) and the inequality

\begin{equation}\label{cond4}
B\left( \frac{x_1^2}{x_0^2}-1\right) |x_2-x_0|-
A\left( \frac{x_2^2}{x_0^2}-1\right) |x_1-x_0|<0~.
\end{equation}
So if we show that inequality (\ref{cond4}) is true, then this will 
imply that inequality (\ref{cond2}) is also true. Recall that 
$x_2<x_1<x_0<0$ and $A>B>C>0$. Hence inequality (\ref{cond4}) is equivalent to 

$$B(|x_1|+|x_0|)-A(|x_2|+|x_0|)<0$$
which is obviously true. We set ${B_*}''=B''$ and $x_1^{1}=x_1$. 

For $s\in \mathbb{N}$ we define in the same way the three points 
$(x_0/q^{2s},C^{(2s)})$, $(x_1^{s}/q^{2s},{B_*}^{(2s)})$ and 
$(x_2/q^{2s},A^{(2s)})$ by the condition that they belong to the graph of 
$\theta (q,.)$, $w_s\leq x_2/q^{2s}<x_1^{s}/q^{2s}<x_0/q^{2s}<-q^{-2s}$, 
$A^{(2s)}>{B_*}^{(2s)}>C^{(2s)}>0$ and 
the point ${B_*}^{(2s)}$ lies below the 
straight line $A^{(2s)}C^{(2s)}$. 
\begin{figure}[htbp]
   \centerline{\hbox{\includegraphics[scale=0.8]{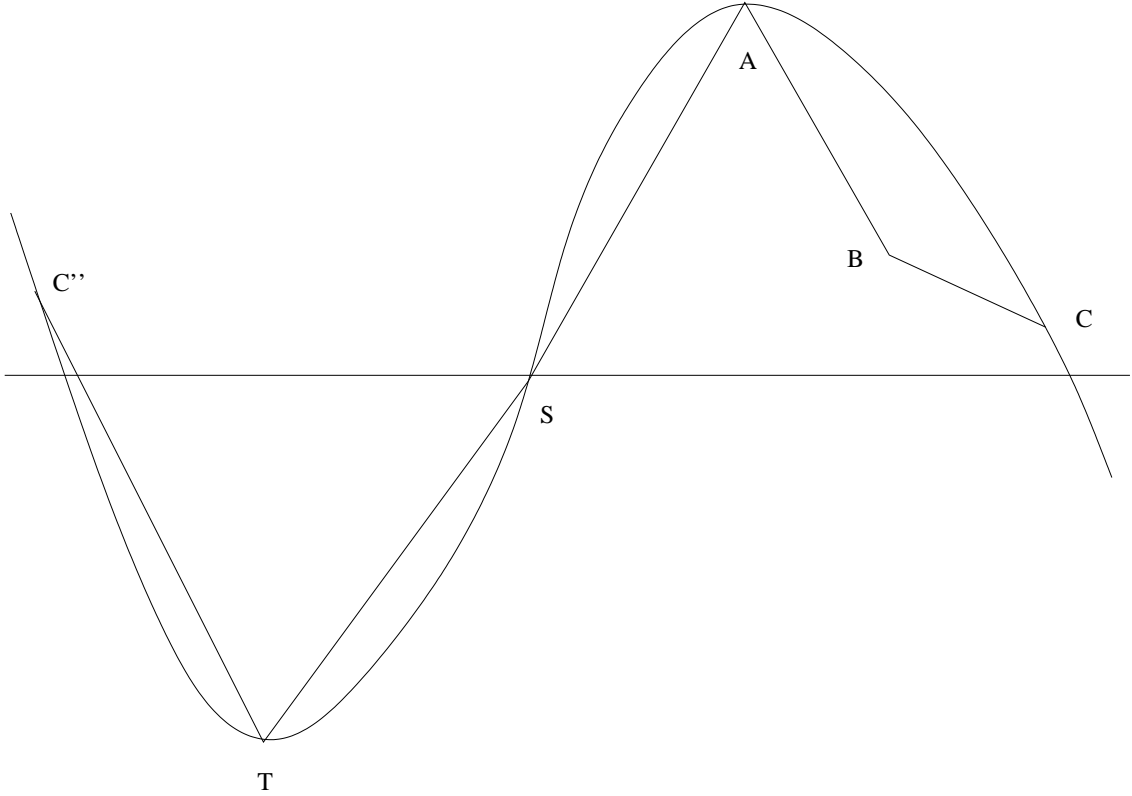}}}
    \caption{Part of the graph of a partial theta function and the points 
$A$, $B$, $C$ and $C''$.}
\label{Fig1}
\end{figure}

This implies that the graph of $\theta (q,.)$ has on each interval 
$(-q^{-2s-2N},-q^{-2s})$, $N\in \mathbb{N}$, at least $O(4N)$ inflection points,  
twice as much as $O(2N)$, the one that should be. (This contradiction 
proves the lemma.) Indeed, on Fig.~\ref{Fig1} 
we show part of the graph of $\theta (q,.)$ as it should look like 
(the sinusoidal curve)
and the points $C$, $B$, $A$ 
and $C''$. If the point $B$ 
is below the straight line $AC$, then the change of convexity requires 
two more inflection points between a local minimum of $\theta (q,.)$ and 
the local maximum to its left.     
\end{proof}

\section{The function $\psi$\protect\label{secpsi}}

In the present section we consider the function 
$\psi (q):=1+2\sum _{j=1}^{\infty}(-1)^jq^{j^2}$. It 
is real-analytic on $(-1,1)$. This function has been studied in \cite{Ko1} 
and the following theorem recalls the basic results about it. Part (1) is a 
well-known property while parts (2) -- (7) are proved in~\cite{Ko1}.

\begin{tm}\label{recallpsi}
(1) By the Jacobi triple product identity the function $\psi$ can 
be expressed as follows (see \cite{PoSz}, Chapter 1, Problem 56):

\begin{equation}\label{Jtriple}
\psi (q)=\prod _{j=1}^{\infty}\frac{1-q^j}{1+q^j}
\end{equation}

(2) The function $\psi$ is decreasing, i.e. $\psi '<0$ for all $q\in (-1,1)$.

(3) For the endpoints of its interval of definition one has the limits 
$\lim _{q\rightarrow 1^-}\psi (q)=0$, 
$\lim _{q\rightarrow -1^+}\psi (q)=+\infty$.

(4) The function $\psi$ is flat at $1$, i.e. for any $l\in \mathbb{N}$, 
$\psi (q)=o((q-1)^l)$ as $q\rightarrow 1^-$. 

(5) The function $\psi$ is convex, i.e. 
$\psi ''\geq 0$ for all $q\in (-1,1)$, with equality only for $q=0$.

(6) Consider the function $\tau (q):=(q-1)\log \psi (q)$. It 
is increasing on $(0,1)$ 
and $\lim _{q\rightarrow 1^-}\tau (q)=\pi ^2/4$. 
This implies that for any $\varepsilon >0$ there exists $\delta >0$ such that 
$\displaystyle{e^{\frac{\pi ^2}{4(q-1)}}<\psi (q)\leq e^{\frac{\pi ^2-\varepsilon}{4(q-1)}}}$ 
for $q\in (1-\delta ,1)$.

(7) As $q\rightarrow -1^+$, the growth rate of the function $\psi$ 
satisfies the conditions $\psi (q)=o((q+1)^{-1})$ and 
$(q+1)^{\alpha}/\psi (q)=o(1)$ for any $\alpha \in (-1,0)$. 
\end{tm}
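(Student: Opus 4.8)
The function $\psi(q)=1+2\sum_{j\ge 1}(-1)^jq^{j^2}=\sum_{n\in\mathbb Z}(-1)^nq^{n^2}$ is the theta constant $\vartheta_4(0,q)$, and the plan is to obtain all seven assertions from two ingredients: the product expansion of $\psi$, and the modular (Poisson summation) transformation. For Part~(1) one starts from the Jacobi triple product identity $\sum_{n\in\mathbb Z}z^nq^{n^2}=\prod_{m\ge 1}(1-q^{2m})(1+zq^{2m-1})(1+z^{-1}q^{2m-1})$ and puts $z=-1$, getting $\psi(q)=\prod_{m\ge 1}(1-q^{2m})(1-q^{2m-1})^2$; since $\prod_{m\ge 1}(1-q^{2m-1})=\prod_{k\ge 1}(1-q^k)/\prod_{m\ge 1}(1-q^{2m})=1/\prod_{m\ge 1}(1+q^m)$, cancellation yields $\psi(q)=\prod_{m\ge 1}(1-q^m)/(1+q^m)$, which is (\ref{Jtriple}); alternatively one quotes \cite{PoSz}.

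For the monotonicity of Part~(2) one takes the logarithmic derivative of (\ref{Jtriple}): $\psi'(q)/\psi(q)=-2\sum_{j\ge 1}jq^{j-1}/(1-q^{2j})$. All factors of the product are positive on $(-1,1)$, so $\psi>0$ there; on $[0,1)$ every summand on the right is $\ge 0$, hence $\psi'\le 0$, strictly off $q=0$. On $(-1,0)$ write $q=-p$, $p\in(0,1)$, so that the sum becomes the alternating series $\sum_{j\ge 1}(-1)^{j-1}a_j$ with $a_j=jp^{j-1}/(1-p^{2j})>0$; the inequality $a_j>a_{j+1}$ reduces, after clearing denominators, to $F_j(p):=j-(j+1)p-jp^{2j+2}+(j+1)p^{2j+1}>0$, and here $F_j(1)=F_j'(1)=0$ while $F_j''(p)=2j(j+1)(2j+1)p^{2j-1}(1-p)>0$ on $(0,1)$, so $F_j$ is strictly convex with a double zero at $1$ and therefore positive on $(0,1)$. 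Thus the alternating series exceeds $a_1-a_2>0$, and $\psi'<0$ on all of $(-1,1)$. Part~(5) (convexity, $\psi''\ge 0$ with equality only at $q=0$) is the step I expect to be the main obstacle: $\psi''(q)=2\sum_{j\ge 2}(-1)^jj^2(j^2-1)q^{j^2-2}$ is an alternating series whose terms are not monotone for $q$ close to $1$, so the alternating test settles the sign only for $q$ bounded away from $1$, and the product representation gives merely $(\log\psi)''\le 0$, which does not help. A workable route is to pass to $q=e^{-t}$, set $g(t)=\psi(e^{-t})$, note $\psi''(q)=q^{-2}(g''(t)+g'(t))$, and prove $g''+g'\ge 0$ using the exact representation of $g$ from the next paragraph together with a separate estimate on a compact middle range of $q$; this is where the real work lies.

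Parts~(3), (4) and (6) at $q\to 1^-$ come from Poisson summation applied to $\psi(e^{-t})=\sum_{n\in\mathbb Z}e^{-tn^2+i\pi n}$. The Fourier transform of $x\mapsto e^{-tx^2+i\pi x}$ is $\sqrt{\pi/t}\,e^{-\pi^2(2\xi-1)^2/(4t)}$, so
\[
\psi(e^{-t})=2\sqrt{\pi/t}\;e^{-\pi^2/(4t)}\bigl(1+e^{-2\pi^2/t}+e^{-6\pi^2/t}+\cdots\bigr).
\]
Letting $q=e^{-t}\to 1^-$ (i.e.\ $t\to 0^+$) gives $\psi(q)\to 0$; since the right-hand side tends to $0$ faster than every power of $t\sim 1-q$, this is exactly the flatness of Part~(4). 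Taking logarithms, $\log\psi(e^{-t})=-\pi^2/(4t)+\frac{1}{2}\log(\pi/t)+\log 2+o(1)$, and with $q-1=-t+O(t^2)$ one gets $\tau(q)=(q-1)\log\psi(q)\to\pi^2/4$; the $\varepsilon$--$\delta$ estimate of Part~(6) is then immediate, and the monotonicity of $\tau$ on $(0,1)$ is obtained by differentiating $\tau$ and using the sign of $\psi'$ from Part~(2).

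Finally, Parts~(3) and (7) at $q\to -1^+$ are reduced to the previous case by a reflection identity. Splitting (\ref{Jtriple}) into even and odd indices, the even-index part of $\prod_m(1-(-p)^m)/(1+(-p)^m)$ is $\psi(p^2)$ and the odd-index part equals $\psi(p^2)/\psi(p)$ (by the same identity applied to $\psi(p)$), so $\psi(-p)=\psi(p^2)^2/\psi(p)$ for $p\in(0,1)$. Substituting the asymptotics just obtained (with $1-p^2\sim 2(1-p)$ as $p\to 1^-$), the exponential factors cancel and one is left with $\psi(-p)\sim\sqrt{\pi/(1-p)}$, i.e.\ $\psi(q)\sim\sqrt{\pi}\,(q+1)^{-1/2}$ as $q\to -1^+$. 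Hence $\psi(q)\to +\infty$, completing Part~(3), and $\psi(q)=o((q+1)^{-1})$; comparing this asymptotic with the powers $(q+1)^\alpha$, $\alpha\in(-1,0)$, gives the growth statements of Part~(7). The only delicate point here is tracking the polynomial prefactors through the reflection identity, the exponential part requiring nothing beyond the transformation formula already established.
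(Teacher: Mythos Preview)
The paper does not prove this theorem: it records that Part~(1) is well known (citing \cite{PoSz}) and that Parts~(2)--(7) are proved in~\cite{Ko1}. So there is nothing in the present paper to compare your argument against directly. Your route via the modular transformation (Poisson summation) is efficient for Parts~(3), (4), the limit in~(6), and~(7), and the alternating--series argument for Part~(2) is correct (even at $q=0$, where the $j=1$ term already makes the sum strictly positive). This is different in spirit from the elementary product/series manipulations the paper itself uses later: in the proof of Proposition~\ref{moreprecise} the function $\tau$ is handled via the explicit series $\tau(q)=2\sum_{k\ge 0}\zeta_k(q)$ with $\zeta_k(q)=q^{2k+1}/\bigl((2k+1)(1+q+\cdots+q^{2k})\bigr)$, with no appeal to the transformation formula.

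There are, however, two genuine gaps. You acknowledge that Part~(5) is left essentially unproved; for the present paper this is harmless, since convexity of $\psi$ is never invoked afterward. More seriously, the monotonicity of $\tau$ in Part~(6) does \emph{not} follow ``by differentiating $\tau$ and using the sign of $\psi'$'': one has $\tau'(q)=\log\psi(q)+(q-1)\psi'(q)/\psi(q)$, and while the second summand is positive on $(0,1)$, the first is negative there (since $\psi(q)<\psi(0)=1$), so the sign of $\tau'$ is not immediate from this alone. A route that does work is the series for $\tau$ just mentioned: a direct computation shows $\zeta_k'(q)=q^{2k}\sum_{i=0}^{2k}(2k+1-i)q^i\big/\bigl((2k+1)S_{2k}^2\bigr)>0$, so each $\zeta_k$ is increasing on $(0,1)$ and hence so is $\tau$.

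Finally, your Part~(7) computation is correct --- indeed $\psi(-p)=\sum_{n\in\mathbb Z} p^{n^2}=\vartheta_3(0,p)$ directly, since $n+n^2$ is always even --- and it yields the sharp asymptotic $\psi(q)\sim\sqrt{\pi}\,(q+1)^{-1/2}$ as $q\to -1^+$. This is stronger than what Part~(7) claims, and in fact it shows that $(q+1)^\alpha/\psi(q)\to 0$ holds only for $\alpha\in(-1/2,0)$; the range $\alpha\in(-1,0)$ in the statement (as quoted from~\cite{Ko1}) appears to be an overstatement which your computation exposes.
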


Set $D:=1/2+\log 2 +\pi ^2/8=2.426847731\ldots$. 
Property (6) can be further detailed: 

\begin{prop}\label{moreprecise}
For $q$ close to $1$ the function $\tau$ is of the form 

$$\tau =\pi ^2/4+(1/2)(1-q)\log (1-q)-K(1-q)+o(1-q)~~~~{\rm with}$$

$$K\in [D,D+1/12]=[2.426847731\ldots ,2.510181064\ldots ]~.$$
Hence $\displaystyle{\psi =e^{\frac{\tau}{q-1}}=e^K(1+o(1))\, (1-q)^{-\frac{1}{2}}\, 
e^{\frac{\pi ^2}{4(q-1)}}}$.
\end{prop}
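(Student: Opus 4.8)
The plan is to sharpen property (6) of Theorem~\ref{recallpsi} by extracting the next term in the asymptotic expansion of $\tau(q)=(q-1)\log\psi(q)$ as $q\to 1^-$. The natural starting point is the Jacobi triple product formula $\psi(q)=\prod_{j=1}^\infty (1-q^j)/(1+q^j)$, whence
$$\log\psi(q)=\sum_{j=1}^\infty\bigl(\log(1-q^j)-\log(1+q^j)\bigr)=-2\sum_{j=1}^\infty\sum_{k \text{ odd}}\frac{q^{jk}}{k}=-2\sum_{k\text{ odd}}\frac{1}{k}\frac{q^k}{1-q^k}.$$
So I would study $F(q):=\sum_{k\text{ odd}}\frac{1}{k}\frac{q^k}{1-q^k}$ as $q\to 1^-$ and show $(q-1)\cdot(-2F(q))=\pi^2/4+(1/2)(1-q)\log(1-q)-K(1-q)+o(1-q)$, i.e. that $-2F(q)=\tfrac{\pi^2}{4(1-q)}-\tfrac12\log(1-q)+K+o(1)$.

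The core step is a precise Laplace/Mellin-type estimate. Writing $q=e^{-\varepsilon}$ with $\varepsilon=-\log q\to 0^+$ (and $\varepsilon=(1-q)+O((1-q)^2)$), one has $\frac{q^k}{1-q^k}=\frac{1}{e^{k\varepsilon}-1}$, so $F=\sum_{k\text{ odd}}\frac{1}{k(e^{k\varepsilon}-1)}$. I would either apply the Mellin transform of $1/(e^x-1)$ (bringing in $\Gamma(s)\zeta(s)$ and odd-index Dirichlet series, i.e. $(1-2^{-s})\zeta(s)$ factors) and shift the contour to pick up the residues at $s=1$, $s=0$, and possibly $s=-1$; or proceed more elementarily by Euler–Maclaurin on $\sum_{k\text{ odd}}$. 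The pole at $s=1$ gives the leading $\pi^2/(4\varepsilon)\sim\pi^2/(4(1-q))$ term (consistent with property (6)); the pole/residue at $s=0$ produces the $\log\varepsilon$ and the constant $\log 2$ pieces; and the value $\zeta(0)=-1/2$ type contribution together with $\Gamma$-function data assembles the constant $D=1/2+\log 2+\pi^2/8$. The interval $[D,D+1/12]$ should arise from a tail/remainder estimate: the Euler–Maclaurin remainder (or the contribution of a contour segment) is bounded but not pinned down exactly, and the natural bound has size $1/12$ — this is where the Bernoulli number $B_2=1/6$, hence $1/12$, enters. So I would isolate a remainder term $R(q)$ with $0\le R(q)+o(1)\le 1/12$ and absorb it into $K$.

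Once the expansion $\log\psi(q)=\tfrac{\pi^2}{4(q-1)}-\tfrac12\log(1-q)+K+o(1)$ is established with $K\in[D,D+1/12]$, multiplying by $(q-1)$ gives the stated formula for $\tau$ directly, using $(q-1)\cdot o(1)=o(1-q)$ and $(q-1)(-\tfrac12\log(1-q))=(1/2)(1-q)\log(1-q)$. Exponentiating and using $e^{-\tfrac12\log(1-q)}=(1-q)^{-1/2}$ yields $\psi=e^{K}(1+o(1))(1-q)^{-1/2}e^{\pi^2/(4(q-1))}$, the last claim. It remains only to check the numerics $D=2.426\ldots$ and $D+1/12=2.510\ldots$, which is routine.

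The main obstacle I expect is the bookkeeping in the remainder estimate: getting the constant $K$ correctly to be $D$ plus a \emph{nonnegative} quantity of size at most $1/12$, rather than an error of uncontrolled sign. Concretely, one must split $\sum_{k\text{ odd}}\frac{1}{k(e^{k\varepsilon}-1)}$ into a "main part" handled by replacing the summand by its Laurent expansion $\frac{1}{k^2\varepsilon}-\frac{1}{2k}+\frac{k\varepsilon}{12}-\cdots$ (each piece summed over odd $k$ via known series: $\sum_{k\text{ odd}}k^{-2}=\pi^2/8$, the regularized $\sum_{k\text{ odd}}k^{-1}\sim -\tfrac12\log\varepsilon+\tfrac12\log 2+\cdots$, etc.) and a remainder that is monotone in an appropriate sense; the sign control of that remainder, coming from the alternating/convexity structure of the Euler–Maclaurin tail for $x/(e^x-1)$, is the delicate point. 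Everything else — the identity from the triple product, the change of variable $q=e^{-\varepsilon}$, and the final exponentiation — is mechanical.
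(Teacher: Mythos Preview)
Your starting identity $\log\psi(q)=-2\sum_{k\ \mathrm{odd}}\frac{q^k}{k(1-q^k)}$ is exactly the paper's, but the routes then diverge. The paper never invokes Mellin transforms or Euler--Maclaurin. It writes $\tau(q)=2\sum_{k\ge 0}\zeta_k(q)$ with $\zeta_k(q)=\frac{q^{2k+1}(1-q)}{(2k+1)(1-q^{2k+1})}$, proves the elementary sandwich $q^{2k+1}/(2k+1)^2\le\zeta_k(q)\le q^{k+1}/(2k+1)^2$, and compares $\tau$ with the explicit majorant $h(q)=2\sum_{k\ge 0} q^{k+1}/(2k+1)^2$. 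The constant $D$ is just the linear coefficient of $h$ at $q=1$, read off after summing $h$ in closed form via $(1+\sqrt q)\log(1+\sqrt q)+(1-\sqrt q)\log(1-\sqrt q)$; the upper bound $K\le D+1/12$ is the purely algebraic inequality $h(q)-\tau(q)\le(1-q)/12$, proved by manipulating the partial sums $S_l=1+q+\cdots+q^l$ (the $1/12$ comes from $\frac{k(k+1)}{6(2k+1)^2}<\frac{1}{24}$, not from $B_2$). So the interval $[D,D+1/12]$ is an artifact of this specific comparison function $h$, and the paper's argument is entirely real-variable and elementary.

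Your Mellin route is viable and in fact sharper, but it will not unfold as you predict. The integrand $\Gamma(s)\zeta(s)(1-2^{-s-1})\zeta(s+1)\varepsilon^{-s}$ has a \emph{double} pole at $s=0$ (both $\Gamma(s)$ and $\zeta(s+1)$ are singular), while all further poles at negative integers are killed by the trivial zeros of $\zeta$ or by $(1-2^{-s-1})|_{s=-1}=0$; so the method delivers the \emph{exact} constant, with no remainder to bound by $1/12$. And that constant is not $D$: the residue at $s=0$ brings in $\zeta'(0)=-\tfrac12\log(2\pi)$ alongside $\zeta(0)=-\tfrac12$, and after converting $\varepsilon$ back to $1-q$ one finds $K=\pi^2/8+\log 2+\tfrac12\log\pi\approx 2.499$, strictly interior to $[D,D+1/12]\approx[2.427,2.510]$. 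The proposition then follows by a one-line numerical check, not via the ``$D$ plus nonnegative remainder $\le 1/12$'' mechanism you sketched. In short: your approach works and buys you the exact value of $K$; the paper's approach is more elementary and is what actually produces the particular endpoints $D$ and $D+1/12$.
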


\begin{proof}
The logarithm of the $j$th factor of 
the right-hand side of formula (\ref{Jtriple}) equals 

\begin{equation}\label{Taylorlog}
\log \frac{1-q^j}{1+q^j}=(-2)\left( q^j+\frac{q^{3j}}{3}+
\frac{q^{5j}}{5}+\cdots \right) ~.
\end{equation} 
This means that 

\begin{equation}\label{Taylorlog1}
\begin{array}{ccl}\log \psi (q)&:=&
\displaystyle{(-2)\sum _{j=1}^{\infty}\left( q^j+\frac{q^{3j}}{3}+
\frac{q^{5j}}{5}+\cdots \right) }\\ \\ 
&=&\displaystyle{(-2)\sum _{k=0}^{\infty}\frac{q^{2k+1}}{(2k+1)(1-q^{2k+1})}}~.
\end{array}
\end{equation}
Hence $\tau (q):=(q-1)\log \psi (q)=2\sum _{k=0}^{\infty}\zeta _k(q)$, where

$$\displaystyle{\zeta _k(q):=\frac{q^{2k+1}}{(2k+1)(1+q+q^2+\cdots +
q^{2k})}=\frac{q^{2k+1}(1-q)}{(2k+1)(1-q^{2k+1})}}~.$$

\begin{lm}\label{lmtau1}
For $q\in (0,1]$ the following inequalities hold true: 

$$q^{2k+1}/(2k+1)^2\leq \zeta _k(q)\leq q^{k+1}/(2k+1)^2$$ 
with equalities only for $q=1$.
\end{lm}

\begin{proof}[Proof of Lemma~\protect\ref{lmtau1}]
The inequalities result from $1+q+\cdots +q^{2k}\leq 2k+1$ and 
$q^{k+j}+q^{k-j}\geq 2q^k$ hence $1+q+\cdots +q^{2k}\geq (2k+1)q^k$ (with 
equalities only for $q=1$). 
\end{proof}

The above lemma gives the idea to compare the function $\tau$ 
(for $q$ close to $1$) with the function 
$h(q):=2\sum _{k=0}^{\infty}q^{k+1}/(2k+1)^2$. The lemma implies the following 
result:

\begin{equation}\label{twoinequalities}
h(q^2)/q\leq \tau (q)\leq h(q)~~,~~(\tau (q)=h(q))~\Longleftrightarrow ~(q=1)
\end{equation}
Our next step is to compare the asymptotic expansions of the functions $\tau$ 
and $h$ close to~$1$:

\begin{lm}\label{lmtau2}
For $q$ close to $1$ the following equality holds:
\begin{equation}\label{equh}
h(q)=\pi ^2/4+(1/2)(1-q)\log (1-q)-
D(1-q)+O((1-q)^2\log (1-q))
\end{equation}
\end{lm}


\begin{proof}[Proof of Lemma~\protect\ref{lmtau2}]
Notice first that 
$\lim _{q\rightarrow 1^-}\tau (q)=h(1)=\pi ^2/4$ and that 
$h=h_1+h_2$, where 

$$h_1:=2\sum _{k=0}^{\infty}q^{k+1}/(2k+1)(2k+2)~~~ 
{\rm and}~~~h_2:=2\sum _{k=0}^{\infty}q^{k+1}/(2k+1)^2(2k+2)~.$$ 
Equation (\ref{Taylorlog}) implies
$\log ((1+q)/(1-q))=
2\sum _{k=0}^{\infty}q^{2k+1}/(2k+1)$. Integrating both sides of this equality 
yields 

$$(1+q)\log (1+q)+(1-q)\log (1-q)=
2\sum _{k=0}^{\infty}q^{2k+2}/(2k+1)(2k+2)~.$$ 
Thus
$h_1=(1+q^{1/2})\log (1+q^{1/2})+(1-q^{1/2})\log (1-q^{1/2})$. The first summand 
is real analytic in a neighbourhood of $1$ and equals 
$2\log 2-(1/2)(1+\log 2)(1-q)+O((1-q)^2)$. The second one is equal to

$$\begin{array}{lc}
[\, (1-q)/(1+q^{1/2})\, ]\, (\log (1-q)-\log (1+q^{1/2}))&=\\ \\ 
(1-q)\, [\, (1/2+O(1-q))\log (1-q)-(\log (1+q^{1/2}))/(1+q^{1/2})\, ]&=\\ \\ 
(1/2)(1-q)\log (1-q)-(1/2)(\log 2)(1-q)+O((1-q)^2\log (1-q))~.\end{array}$$
About the function $h_2$ one can notice that there exist the limits 
$\lim _{q\rightarrow 1^-}h_2$ and 
$\lim _{q\rightarrow 1^-}h_2'$ (the latter equals $\pi ^2/8$). 
This implies formula~(\ref{equh}).
\end{proof}

\begin{lm}\label{lmtau3}
For $q\in (0,1)$ it is true that 
\begin{equation}\label{equlmtau3}
h(q)-\tau (q)\leq (1-q)/12~.
\end{equation}
\end{lm}

Proposition~\ref{moreprecise} results from the last two lemmas.
\end{proof}
\begin{proof}[Proof of Lemma~\protect\ref{lmtau3}]
To prove formula~(\ref{equlmtau3})  
set $R:=1/(2k+1)^2(1+q+\cdots +q^{2k})$ and $S_l:=1+q+\cdots +q^l$. Hence 

\begin{equation}\label{several}
\begin{array}{rcl}
\displaystyle{\frac{1}{2k+1}\left(\frac{q^{k+1}}{2k+1}- 
\frac{q^{2k+1}}{1+q+\cdots +q^{2k}}\right)}&=&
(q^{k+1}(1+q+\cdots +q^{2k})-(2k+1)q^{2k+1})R\\
&=&\displaystyle{\left( \sum _{j=0}^{k-1}(q^{k+1+j}+q^{3k+1-j}-2q^{2k+1})\right) R}\\
&=&\displaystyle{\left( \sum _{j=0}^{k-1}q^{k+1+j}(1-q^{k-j})^2\right) R}\\ 
&=&\displaystyle{q^{k+1}(1-q)^2\left( \sum _{j=0}^{k-1}q^j
(1+q+\cdots +q^{k-j-1})^2\right) R}\\ 
&=&\displaystyle{q^{k+1}(1-q)^2\left( 
\sum _{j=0}^{k-1}q^j\left( 
\sum _{\nu =0}^{k-j-1}q^{\nu}S_{2k-2j-2-2\nu}\right) \right) R~.}
\end{array}
\end{equation}
The sums $S_l$ enjoy the following property:

\begin{equation}\label{S_l}
(l-1)S_l\geq (l+1)qS_{l-2}~.
\end{equation}
Indeed, this is equivalent to $(l-1)(1+q^l)\geq 2qS_{l-2}$. The last inequality 
follows from $1+q^r\geq q+q^{r-1}$ (i.e. $(1-q)(1-q^{r-1})\geq 0$) applied for 
suitable choices of the exponent $r$. Equation (\ref{S_l}) implies the 
next property (whenever the indices are meaningful):

\begin{equation}\label{S_lbis}
(l-2\nu +1)S_l\geq (l+1)q^{\nu}S_{l-2\nu}~.
\end{equation}
Using equation (\ref{S_lbis}) one can 
notice that the right-hand side of (\ref{several}) is not larger than 
  
$$\begin{array}{cl}&\displaystyle{q^{k+1}(1-q)^2\left( 
\sum _{j=0}^{k-1}q^j\left( 
\sum _{\nu =0}^{k-j-1}\frac{2k-2j-1-2\nu}{2k-2j-1}S_{2k-2j-2}\right) \right) R}\\
\leq &\displaystyle{q^{k+1}(1-q)^2\left( 
\sum _{j=0}^{k-1}\left( 
\sum _{\nu =0}^{k-j-1}\left( \frac{2k-2j-1-2\nu}{2k-2j-1}\right) \left(  
\frac{2k-2j-1}{2k-1}\right) S_{2k-2}\right) \right) R}\\
=&\displaystyle{q^{k+1}(1-q)^2\left( \sum _{j=0}^{k-1}\sum _{\nu =0}^{k-j-1}
\frac{2k-2j-1-2\nu}{2k-1}S_{2k-2}\right) R}\\
=&\displaystyle{q^{k+1}(1-q)^2
\left( \sum _{j=0}^{k-1}\frac{(k-j)^2}{2k-1}S_{2k-2} \right) R}\\ 
=&\displaystyle{q^{k+1}(1-q)^2\frac{k(k+1)(2k+1)}{6(2k-1)(2k+1)^2}
\frac{S_{2k-2}}{S_{2k}}}\\
\leq&\displaystyle{q^k(1-q)^2\frac{k(k+1)}{6(2k+1)^2}~.}
\end{array}$$
At the last line we used property (\ref{S_l}) with $l=2k$. 
The last fraction is less 
than $1/24$. Hence $h(q)-\tau (q)\leq (1-q)^2\sum _{k=0}^{\infty}q^k/12=(1-q)/12$.
\end{proof}

\section{Proof of Theorem~\protect\ref{rztm}
\protect\label{prrztm}}

We follow the same path of reasoning as the one used in \cite{Ko3}. In this section we use the results of \cite{Ko3} and \cite{Ko1}. 
Set 

$$\lambda _s(q):=\sum _{j=2s}^{\infty}(-1)^jq^{j^2/2}~,~ 
\chi _s(q):=\lambda _s(q)/q^{2s^2}=\sum _{j=2s}^{\infty}(-1)^jq^{j^2/2-2s^2}=
\sum _{j=0}^{\infty}(-1)^jq^{(j^2+4js)/2}~.$$ 
The equation $\theta (q,-q^{-2s+1/2})=0$ is equivalent to (see~\cite{Ko3}) 

\begin{equation}\label{psilambda1}
\psi (q^{1/2})=\lambda _s(q)=q^{2s^2}\chi _s(q)~.
\end{equation} 
The following lemma is also proved in \cite{Ko3}:

\begin{lm}\label{lmpsi}
(1) One has $\lim _{q\rightarrow 1^-}\lambda _s(q)=
\lim _{q\rightarrow 1^-}\chi _s(q)=1/2$. 

(2) For $s\in \mathbb{N}$ sufficiently large the graphs of the functions 
$\psi (q^{1/2})$ and $\lambda _{s}(q)$ 
(considered for $q\in [0,1]$) intersect at exactly one point belonging to 
$(0,1)$ and at $1$.

(3) For $q\in [0,1]$ 
the inequality $\lambda _s(q)\geq \lambda _{s+1}(q)$ holds true  
with equality for $q=0$ and $q=1$.

(4) For $q\in [0,1]$ one has $1/2\leq \chi _s(q)\leq 1$.  
\end{lm}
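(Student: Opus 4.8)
\textbf{Plan of proof of Lemma~\ref{lmpsi}.} All four parts concern the functions $\lambda_s(q)=\sum_{j\ge 2s}(-1)^jq^{j^2/2}$ and $\chi_s(q)=\sum_{j\ge 0}(-1)^jq^{(j^2+4js)/2}$, so the strategy is to exploit the alternating-series structure of $\chi_s$. First I would record the pointwise evaluation $\chi_s(1)=\sum_{j\ge 0}(-1)^j$ in the Abel-summed sense, but more carefully: for $q\in[0,1)$ the series $\chi_s(q)=\sum_{j\ge 0}(-1)^jq^{a_j}$ with strictly increasing positive exponents $a_j=(j^2+4js)/2$ is an alternating series with terms decreasing in absolute value, so it converges, and by pairing consecutive terms $\chi_s(q)=\sum_{m\ge 0}\bigl(q^{a_{2m}}-q^{a_{2m+1}}\bigr)$, a sum of nonnegative terms, giving $\chi_s(q)\ge 0$; likewise writing $\chi_s(q)=1-\sum_{m\ge 0}\bigl(q^{a_{2m+1}}-q^{a_{2m+2}}\bigr)$ gives $\chi_s(q)\le 1$. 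For part (1), the limit $q\to 1^-$: group in pairs as above; each paired term $q^{a_{2m}}-q^{a_{2m+1}}\to 0$, and one needs a uniform bound to pass the limit inside the sum (e.g.\ $q^{a_{2m}}-q^{a_{2m+1}}\le q^{a_{2m}}(1-q^{a_{2m+1}-a_{2m}})$ and $1-q^t\le t(1-q)$, so the $m$th paired term is $O((1-q)\cdot(\text{something summable}))$), which forces $\lim_{q\to1^-}\chi_s(q)$; but Abel summation of $\sum(-1)^j$ gives $1/2$, so the limit is $1/2$, and since $\lambda_s=q^{2s^2}\chi_s$ and $q^{2s^2}\to 1$, also $\lambda_s\to 1/2$.

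For part (4) I would sharpen the crude bounds $0\le\chi_s\le 1$ to $1/2\le\chi_s(q)\le 1$ on $[0,1]$. The upper bound $\chi_s\le 1$ is the pairing argument above (and $\chi_s(1)=1/2\le 1$, $\chi_s(0)=1$). For the lower bound $\chi_s(q)\ge 1/2$: write $\chi_s(q)=(q^{a_0}-q^{a_1})+(q^{a_2}-q^{a_3})+\cdots=\sum_{m\ge0}q^{a_{2m}}(1-q^{a_{2m+1}-a_{2m}})$. Since $a_0=0$, the first paired term is $1-q^{a_1}=1-q^{2s+1/2}$, which already is $\ge$ something, but that is not by itself $\ge1/2$ for $q$ near $1$. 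Instead I would use a different grouping: $\chi_s(q)=1-q^{a_1}+\sum_{m\ge1}\bigl(q^{a_{2m}}-q^{a_{2m-1}}\bigr)$—no, the sign alternation makes $q^{a_{2m}}-q^{a_{2m-1}}\le 0$. The clean way is to write $2\chi_s(q)=\chi_s(q)+\chi_s(q)$ and symmetrize the two pairings: $\chi_s=q^{a_0}-\sum_{m\ge0}(q^{a_{2m+1}}-q^{a_{2m+2}})$ gives $\chi_s\ge q^{a_0}-\sum(\text{those})$, while the telescoping-type estimate $\sum_{m\ge0}(q^{a_{2m+1}}-q^{a_{2m+2}})\le \sum_{m\ge0}(q^{a_{2m+1}}-q^{a_{2m+3}})$... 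The honest approach: compare $\chi_s(q)$ with the geometric-like series by noting the gaps $a_{j+1}-a_j=2s+1/2+j$ are increasing, so $q^{a_{j+1}}/q^{a_j}$ is decreasing; for such "log-concave-exponent" alternating series one has the two-sided bound $\tfrac12 q^{a_0}(1+q^{a_1-a_0})\cdots$—I would simply invoke the standard fact that for $0\le q<1$ and increasing gaps, $1-q^{a_1-a_0}\le \chi_s/q^{a_0}$... and since $a_0=0$, pin down that $\chi_s(q)\ge \tfrac12(1 + (q^{a_0}-q^{a_1})-(q^{a_1}-q^{a_2})+\cdots)$, i.e.\ average the two pairings to get $\chi_s=\tfrac12\bigl(1+q^{a_0}-q^{a_1}+(q^{a_2}-q^{a_1})+\cdots\bigr)$, where each bracketed block $q^{a_{2m}}-q^{a_{2m+1}}-(q^{a_{2m+1}}-q^{a_{2m+2}})=q^{a_{2m}}-2q^{a_{2m+1}}+q^{a_{2m+2}}\ge 0$ precisely because the exponent sequence is convex in the relevant sense (this is where I expect the main work: verifying $q^{a}-2q^{b}+q^{c}\ge0$ when $b-a\le c-b$, which follows from convexity of $t\mapsto q^t$). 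This yields $\chi_s\ge\tfrac12$.

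For part (3), the monotonicity $\lambda_s\ge\lambda_{s+1}$ on $[0,1]$: I would write $\lambda_s(q)-\lambda_{s+1}(q)=\sum_{j=2s}^{2s+1}(-1)^jq^{j^2/2}=q^{(2s)^2/2}-q^{(2s+1)^2/2}=q^{2s^2}-q^{2s^2+2s+1/2}=q^{2s^2}(1-q^{2s+1/2})$, which is $\ge0$ for $q\in[0,1]$, with equality exactly at $q=0$ (first factor vanishes) and $q=1$ (second factor vanishes). This is the easiest part and essentially a one-line computation.

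For part (2), the intersection of the graphs of $\psi(q^{1/2})$ and $\lambda_s(q)$ on $[0,1]$: I would argue via the substitution that makes both sides comparable. At $q=1$: $\psi(1)=0$ (Theorem~\ref{recallpsi}(3)) and $\lambda_s(1)=1/2$—wait, these are not equal, so the claim "intersect at $1$" must mean as a limiting boundary point of $[0,1)$ where they... actually $\lambda_s(1)=1/2\ne0=\psi(1)$; re-reading, the equation (\ref{psilambda1}) is $\psi(q^{1/2})=\lambda_s(q)$, and near $q=1$, $\psi(q^{1/2})$ is flat (Theorem~\ref{recallpsi}(4)) hence tiny, while $\lambda_s(q)\to1/2$, so they do not meet at $1$; the statement in the lemma says "intersect at exactly one point belonging to $(0,1)$ and at $1$"—I interpret this as: on $[0,1]$ the two continuous functions $\psi(q^{1/2})$ and $q^{2s^2}\chi_s(q)$, hmm, but $q^{2s^2}\chi_s(q)\to 1/2$ too. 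I would take the statement at face value from \cite{Ko3} and prove uniqueness of the interior intersection as follows: at $q=0$, $\psi(0)=1>\lambda_s(0)=0$ (since the lowest exponent in $\lambda_s$ is $2s^2>0$); as $q\to1^-$, $\psi(q^{1/2})\to0$ while $\lambda_s(q)\to1/2$, so $\psi(q^{1/2})<\lambda_s(q)$ near $1$; hence by the intermediate value theorem there is at least one crossing in $(0,1)$. For uniqueness I would show the difference $\Delta_s(q):=\psi(q^{1/2})-\lambda_s(q)$ changes sign only once, by establishing that once $\Delta_s$ becomes negative it stays negative—using that $\psi(q^{1/2})$ decays super-polynomially fast (flatness at $1$, Theorem~\ref{recallpsi}(4)) and is convex and decreasing (Theorem~\ref{recallpsi}(2),(5)) after its single... no, $\psi$ is monotone decreasing on all of $(-1,1)$, so $\psi(q^{1/2})$ is decreasing in $q$; meanwhile $\lambda_s(q)=q^{2s^2}\chi_s(q)$ with $\chi_s\in[1/2,1]$ is increasing for $q$ large (dominated by the factor $q^{2s^2}$). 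So for $s$ large, $\psi(q^{1/2})$ is decreasing and $\lambda_s$ is increasing on the relevant range $[q_0,1]$ where $q_0$ is chosen so that $q^{2s^2}\chi_s$ is increasing there; hence they cross at most once on $[q_0,1]$, and on $[0,q_0]$ one shows $\psi(q^{1/2})>\lambda_s(q)$ directly because $\lambda_s(q)\le q^{2s^2}\le q_0^{2s^2}$ is exponentially small in $s$ while $\psi(q^{1/2})\ge\psi(q_0^{1/2})>0$ is bounded below independently of $s$. That "decreasing vs.\ increasing $\Rightarrow$ at most one crossing" dichotomy, together with the IVT for existence, gives exactly one interior intersection; the mention of "at $1$" I read as recording that $q=1$ is the common boundary point of the interval of definition where equation (\ref{psilambda1}) degenerates (both sides have limits, $0$ and $1/2$, so strictly it is not a solution—I would phrase this carefully or simply cite \cite{Ko3} for the precise count). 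The main obstacle in part (2) is pinning down the threshold $q_0=q_0(s)$ and verifying monotonicity of $\lambda_s$ on $[q_0,1]$ uniformly for large $s$: this amounts to checking $\lambda_s'(q)=\sum_{j\ge2s}(-1)^j(j^2/2)q^{j^2/2-1}>0$ there, again an alternating-series estimate whose leading term $(2s)^2/2\cdot q^{2s^2-1}$ dominates the next, $-(2s+1)^2/2\cdot q^{(2s+1)^2/2-1}$, precisely when $q^{2s+1/2}<(2s)^2/(2s+1)^2$, i.e.\ for $q$ bounded away from $1$ by an amount $\sim(\log s)/s$—consistent with the asymptotics being proved, and the place I expect the bookkeeping to be most delicate.
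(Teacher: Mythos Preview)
The paper does not give its own proof of this lemma; it cites \cite{Ko3}. So there is no in-paper proof to compare against, and I evaluate your sketch on its own merits.

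Your argument for part~(3) is correct and clean, and your outline for parts~(1) and~(2) is reasonable (including your correct observation that the phrase ``and at $1$'' is puzzling since $\psi(1)=0\neq 1/2=\lambda_s(1^-)$; this is likely an artifact of the phrasing in \cite{Ko3}).

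However, your proof of the lower bound $\chi_s(q)\ge 1/2$ in part~(4) contains a genuine error. You propose to write $2\chi_s(q)=1+\sum_{m\ge 0}\bigl(q^{a_{2m}}-2q^{a_{2m+1}}+q^{a_{2m+2}}\bigr)$ and then claim each bracket is nonnegative ``because the exponent sequence is convex,'' i.e.\ because $a_{2m+1}-a_{2m}\le a_{2m+2}-a_{2m+1}$. This implication goes the wrong way. Convexity of $t\mapsto q^t$ gives $q^b\le\tfrac12(q^a+q^c)$ when $b\ge\tfrac12(a+c)$, i.e.\ when the gaps are \emph{decreasing}; here the gaps $a_{j+1}-a_j=(2j+1+4s)/2$ are strictly increasing (second difference equal to $1$), so $a_{2m+1}<\tfrac12(a_{2m}+a_{2m+2})$ and convexity yields nothing. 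Concretely, for the first block one has $q^{a_0}-2q^{a_1}+q^{a_2}=1-2u+qu^2$ with $u=q^{(4s+1)/2}$, and at $u=1$ this equals $q-1<0$; hence for every $q\in(0,1)$ close enough to $1$ the first bracket is strictly negative. So the ``averaged pairing'' does not establish $\chi_s\ge 1/2$, and a different idea is needed for the lower bound (for instance, an argument tied to the specific theta-type structure rather than generic alternating-series convexity).
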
 

Part (2) 
of the lemma implies that for each $s$ sufficiently large the number 
$\tilde{r}_s$ is correctly 
defined. Part (3) implies that the numbers $\tilde{r}_s$ form an increasing 
sequence. Indeed, this follows from $\psi (q^{1/2})$ being a decreasing 
function, see part (2) of Theorem~\ref{recallpsi}. 

Recall that the constant $K$ was introduced by Proposition~\ref{moreprecise}. 
Set $q:=\tilde{r}_s=1-h_s/s$. Consider the equalities (\ref{psilambda1}). The 
left-hand side is representable in the form 
$$\displaystyle{e^K(1+o(1))\, [\, (1-q)^{-1/2}/
(1+\sqrt{q})^{-1/2}\, ]\, e^{\pi ^2(\sqrt{q}+1)/4(q-1)}~,}$$
see Proposition~\ref{moreprecise}.
Hence $\log \psi (q^{1/2})$ is of the form

$$\begin{array}{cl}&
(\pi ^2/4)(-s/h_s)(2-(1/2)(h_s/s)+o(h_s/s))-(1/2)\log (h_s/s)+
(1/2)\log 2+K +o(1)\\ \\ =&
-(\pi ^2/2)(s/h_s)+(1/2)\log s-(1/2)\log h_s+L+o(1)~,\end{array}$$
where $L:=K+(1/2)\log 2+\pi ^2/8$. 
The right-hand side of (\ref{psilambda1}) equals 
$(1-h_s/s)^{2s^2}\chi _s(1-h_s/s)$. 

Hence its logarithm is of the form

$$\begin{array}{rcl}(2s^2)\log (1-h_s/s)+\log (\chi _s(1-h_s/s))&=&
-(2s^2)(h_s/s+h_s^2/2s^2+O(1/s^3))-\log 2+o(1)\\ \\ 
&=&-2sh_s-(h_s)^2-\log 2+o(1)~.\end{array}$$
(we use $\chi _s=1/2+o(1)$, see part (1) of Lemma~\ref{lmpsi}; hence 
$\log h_s=\log (\pi /2)+o(1)$). 
Set $h_s:=\pi /2+d_s$. Hence $d_s=o(1)$, see equality $(A)$ 
after Theorem~\ref{rqrtm}, and

$$\begin{array}{cl}&
-(\pi ^2/2)(s/(\pi /2+d_s))+(1/2)\log s-(1/2)\log (\pi /2)+L+o(1)\\ \\=&
-2s(\pi /2+d_s)-(\pi /2+d_s)^2-\log 2+o(1)\end{array}$$
or equivalently
\begin{equation}\label{longequ}
\begin{array}{cl}&-(\pi ^2/2)s+(\pi /2+d_s)((\log s)/2-(\log (\pi /2))/2+L)+o(1)\\ \\=&
-2s(\pi /2+d_s)^2-(\pi /2+d_s)^3-(\pi /2+d_s)\log 2+o(1)\end{array}
\end{equation}
The terms $-\pi ^2/2s$ cancel. Hence 

$$((\pi /2+d_s)/2)\log s=-2s\pi d_s-2s(d_s)^2+O(1)$$ 
i.e. $d_s=-((\log s)/8s)(1+o(1))$. Set $d_s:=-(\log s)/8s+g_s$. Using 
equation (\ref{longequ}) one gets 

\begin{equation}\label{TTTT}
\begin{array}{cl}
&-(\pi ^2/2)s+(\pi /2-(\log s)/8s+g_s)((\log s)/2-(\log (\pi /2))/2+L)+o(1)\\ \\=&-2s(\pi /2-(\log s)/8s+g_s)^2-(\pi /2-(\log s)/8s+g_s)^3-(\pi /2-(\log s)/8s+g_s)\log 2\end{array}
\end{equation}
To find the main asymptotic term in the expansion of $g_s$ we have to leave 
only the linear terms in $g_s$ and the terms 
independent of $g_s$ (because $g_s^2=o(g_s)$).   
The left-hand side of equation (\ref{TTTT}) takes the form:

$$-(\pi ^2/2)s+(\pi /4)(\log s)-(\pi /4)\log (\pi /2)+(\pi /2)L+
g_s((\log s)/2)(1+o(1))+o(1)~.$$
The right-hand side equals

$$-2s(\pi /2)^2+\pi (\log s)/4-2s\pi g_s-\pi ^3/8-(3\pi ^2/4)g_s
-(\pi /2)\log 2-g_s\log 2+o(1)~.$$
The terms $s$ and $\log s$ cancel. The remaining terms give the equality

$$\begin{array}{ll}
&(((\log s)/2)(1+o(1))+2s\pi +O(1))g_s\\ \\ =&
(\pi /4)\log (\pi /2)-(\pi /2)L-\pi ^3/8-(\pi /2)\log 2+o(1)~.
\end{array}$$
Hence $g_s=(1/s)(M+o(1))$, where 

$$M:=(\log (\pi /2))/8-L/4-\pi ^2/16-(\log 2)/4=
(\log (\pi /8))/8-L/4-\pi ^2/16~.$$ 
Now recall that 

$$L=K+(1/2)\log 2+\pi ^2/8~~~~{\rm and}~~~~ 
K\in [2.426847731\ldots ,2.510181064\ldots ]~.$$ 
Hence 
$M=-b^*=(\log (\pi /16))/8-3\pi ^2/32-K/4$, 
so $b^*\in [1.735469700\ldots ,1.756303033\ldots ]$.

Thus we 
have proved the first of formulas (\ref{asnewrz}). To prove the second one 
it suffices to notice that 

$$z_s=-(\tilde{r}_s)^{-2s+1/2}=
-(1-\pi /2s+(\log s)/8s^2+b^*/s^2+\cdots )^{-2s+1/2}~.$$ 
Set $\Phi :=\pi /2s-(\log s)/8s^2-b^*/s^2+\cdots$. 
Hence 
$$z_s=-e^{(-2s+1/2)\log (1-\Phi )}=-e^{(-2s+1/2)(-\Phi -\Phi ^2/2-\cdots )}
=-e^{\pi}e^{-(\log s)/4s+\alpha ^*/s+\cdots}$$
with $\alpha ^*=-\pi /4-2b^*+\pi ^2/4\in 
[-1.830603128\ldots ,-1.788936462\ldots ]$.

\end{document}